\numberwithin{equation}{section}
\newtheorem{Theorem}{Theorem}[section]
\newtheorem{Corollary}[Theorem]{Corollary}
\newtheorem{Lemma}[Theorem]{Lemma}
\theoremstyle{definition}
\theoremstyle{remark}
\newtheorem*{remark}{Remark}
\newcommand{\lcm}{\mathrm{lcm}}
\title[McKay-Thompson coefficients and distributions of the moonshine module]{Coefficients of McKay-Thompson series and distributions of the moonshine module}
\author[Hannah Larson]{Hannah Larson}
\address{Department of Mathematics, Harvard University, Cambridge, MA 02138}
\email{hannahlarson@college.harvard.edu}
\begin{document}

\maketitle

\begin{abstract}
In a recent paper, Duncan, Griffin and Ono provide exact formulas for the coefficients of McKay-Thompson series and use them to find asymptotic expressions for the distribution of irreducible representations in the moonshine module $V^\natural = \bigoplus_n V_n^\natural$. 
Their results show that as $n$ tends to infinity, $V_n^\natural$ is dominated by direct sums of copies of the regular representation. That is, if we view $V_n^\natural$ as a module over the group ring $\mathbb{Z}[\mathbb{M}]$, the free-part dominates. A natural problem, posed at the end of the aforementioned paper, is to characterize the distribution of irreducible representations in the non-free part. Here, we study asymptotic formulas for the coefficients of McKay-Thompson series to answer this question. We arrive at an ordering of the series by the magnitude of their coefficients, which corresponds to various contributions to the distribution. In particular, we show how the asymptotic distribution of the non-free part is dictated by the column for conjugacy class 2A in the monster's character table. 
We find analogous results for the other monster modules $V^{(-m)}$ and $W^\natural$ studied by Duncan, Griffin, and Ono.
\end{abstract}

\section{Introduction}
\textit{Monstrous moonshine} refers to the mysterious connection between the representation theory of the largest sporadic simple group $\mathbb{M}$, known as the \textit{monster group}, and the theory functions which are invariant under the action of certain subgroups of $\mathrm{GL}_2(\mathbb{Q})^+$, known as \textit{modular functions}.
The first hints of monstrous moonshine came in the famous observations of McKay and Thompson,
\begin{equation} \label{eq1}
1 = 1, \qquad 196884 = 196883+1, \qquad 21493760=1+196883+21296876,
\end{equation}
among others.
The numbers on the left are coefficients in the Fourier expansion of the \textit{normalized $j$-function}, the modular function defined by
\begin{align}
J(\tau) &:= j(\tau) - 744 = \frac{E_4(\tau)}{\Delta(\tau)} - 744 \\
&= \sum_{n=-1}^\infty c(n)q^n = q^{-1} + 196884q + 21493760q^2 + \ldots, \notag
\end{align}
where $E_4(\tau)$ is the Eisenstein series of weight $4$, $\Delta(\tau)$ is the modular discriminant, and $q:= e^{2\pi i \tau}$.
 Meanwhile, the numbers in the summands on the right-hand side are dimensions of irreducible representations of the monster group.

The striking coincidences in \eqref{eq1}, along with many similar observations, led Thompson to conjecture \cite{T} the existence of a naturally-defined infinite-dimensional monster module, $V^\natural = \bigoplus_{n=-1}^\infty V_n^\natural$, satisfying 
\[\dim(V_n^\natural) = c(n).\]
Thompson then considered the other graded trace-functions that would arise from $V^\natural$,
\begin{equation}
T_g(\tau) :=q^{-1} + \sum_{n=1}^\infty c_g(n) := \sum_{n=-1}^\infty \text{tr}(g|V_n^\natural)q^n,
\end{equation}
for $g \in \mathbb{M}$, known as \textit{McKay-Thompson series}, and found that their coefficients were also expressible as simple sums of entries in the monster's character table.

The distinguishing feature of $J(\tau)$ is that it generates the field of $\mathrm{SL}_2(\mathbb{Z})$-invariant functions on the upper-half plane with at most exponential growth as $\mathfrak{I}(\tau) \rightarrow \infty$, and is the unique such function with Fourier expansion beginning $q^{-1} + O(q)$. Given any subgroup $\Gamma \subset \mathrm{GL}_2(\mathbb{Q})^+$ commensurable with $\mathrm{SL}_2(\mathbb{Z})$, if the field of $\Gamma$-invariant functions with at most exponential growth at the cusps is generated by a single function, then we
 call the unique generator with Fourier expansion beginning $q^{-1} + O(q)$ the \textit{Hauptmodul} for $\Gamma$. The famous monstrous moonshine conjecture of Conway and Norton states that for each $g \in \mathbb{M}$, the McKay-Thompson series $T_g(\tau)$ is the Hauptmodul for a particular discrete subgroup $\Gamma_g \subset\mathrm{GL}_2(\mathbb{Q})^+$, which they describe explicitly \cite{CN}. In highly celebrated works, Frenkel, Lepowsky and Murman constructed the moonshine module $V^\natural$ in \cite{FLM, FLM2}, and Borcherds proved the monstrous moonshine conjecture in \cite{B}.
 
More recent work shows that the monster also plays a role in quantum gravity, in particular in theories of chiral three-dimensional gravity \cite{DGO,85}. These theories take interest in a tower of monster modules
 \[V^{(-m)} = \bigoplus_{n=-m}^\infty V_n^{(-m)}\]
 with $V^{(-1)} = V^\natural$. In \cite{DGO}, the authors consider the \textit{order $m$ McKay-Thompson series} arising from these modules,
\begin{equation}
T_g^{(-m)}(\tau) :=q^{-m} + \sum_{n=1}^\infty c_g(-m, n) q^n := q^{-m} + \sum_{n=1}^\infty \text{tr}(g|V_n^{(-m)})q^n, \end{equation}
for $g \in \mathbb{M}$, and show that they coincide with particular Rademacher sums of order $m$ for the group $\Gamma_g$ (see Theorem 7.1 of \cite{DGO}).
 
 It turns out that the multiplicities of irreducible representations in $V_n^{(-m)}$
 are expressible as sums of entries in the monster's character table weighted by the McKay-Thompson coefficients $c_g(-m,n)$. 
 We first give exact formulas for the coefficients $c_g(-m,n)$, similar to those in Theorem 8.12 of \cite{DGO}, which are better suited to our purposes.
 
 \begin{Theorem}\label{Th1}
  Let $g \in \mathbb{M}$, and let $\mathcal{W}_g$ be the set corresponding to the Atkin-Lehner involutions of $\Gamma_g$ given in the appendix. For any positive integers $m$ and $n$, the coefficients of $T_g^{(-m)}(\tau)$ are exactly
 \[c_g(-m,n) = \sum_{e \in \mathcal{W}_g} 2\pi \sqrt{\frac{em}{n}} \sum_{c>0} \frac{K_c(g,e, -m, n)}{c}I_1\left(\frac{4\pi}{c}\sqrt{emn} \right) ,\]
 where $K_c(g,e,-m,n)$ is the Kloosterman sum defined in \eqref{defK} and $I_1(x)$ is the Bessel function of the first kind.
 \end{Theorem}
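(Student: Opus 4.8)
The plan is to start from the identification of $T_g^{(-m)}(\tau)$ with a Rademacher sum of order $m$ for $\Gamma_g$ (Theorem~7.1 of \cite{DGO}) and to compute its $n$-th Fourier coefficient by a Poincar\'e-series unfolding argument, organized according to the Atkin--Lehner coset structure of $\Gamma_g$. In weight $0$ this Rademacher sum takes the form
\[
T_g^{(-m)}(\tau)\;=\;q^{-m}\;+\;\lim_{K\to\infty}\ \sum_{\substack{\gamma\in\Gamma_{g,\infty}\backslash\Gamma_g\\ 0<c_\gamma\le K}}\Bigl(e(-m\,\gamma\tau)\;-\;\rho_{m}(\gamma,\tau)\Bigr),
\]
where $\Gamma_{g,\infty}$ is the stabilizer of the cusp $\infty$, $c_\gamma$ is the lower-left entry of $\gamma$, and $\rho_m$ is the standard Rademacher regularization (trivial for $m=1$, and chosen so that the principal part of the sum is exactly $q^{-m}$). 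Since $\Gamma_g$ is a group of moonshine type, containing some $\Gamma_0(N_g)$ with finite index, it is a disjoint union of cosets attached to its Atkin--Lehner involutions $W_e$ as $e$ runs over the index set $\mathcal W_g$ of the appendix; accordingly every $\gamma\in\Gamma_g$ with $c_\gamma\neq0$ has determinant some $e\in\mathcal W_g$ and can be represented as $\tfrac1{\sqrt e}\left(\begin{smallmatrix}a&b\\ c&d\end{smallmatrix}\right)$ with $a,b,c,d\in\mathbb Z$, $ad-bc=e$, $c>0$, subject to congruence conditions depending on $g$ and $e$. This breaks the Rademacher sum into the contributions of the sectors indexed by $\mathcal W_g$.

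Within the $e$-sector I would group terms by the value of $c$ and apply the identity
\[
\gamma\tau\;=\;\frac ac\;-\;\frac{e}{c(c\tau+d)}\qquad(ad-bc=e),
\]
so that $e(-m\,\gamma\tau)=e\!\bigl(-\tfrac{ma}{c}\bigr)\,e\!\bigl(\tfrac{me}{c^2(\tau+d/c)}\bigr)$. Extracting the $n$-th coefficient as the integral of $T_g^{(-m)}(\tau)\,e(-n\tau)$ over a horizontal segment of length $1$, the sum over $d$ in a fixed class modulo $c$ unfolds this into an integral over $\mathbb R+iy$ and produces a factor $e(nd_0/c)$. The remaining finite sum over the residues $a,d_0\pmod c$ allowed by the coset conditions, weighted appropriately by $g$, is exactly the Kloosterman sum $K_c(g,e,-m,n)$ of \eqref{defK}, while the leftover analytic integral is the classical Bessel integral
\[
\int_{\mathbb R+iy}e\!\left(\frac{me}{c^2w}\right)e(-nw)\,dw\;=\;2\pi\sqrt{\tfrac{em}{n}}\;\frac1c\,I_1\!\left(\frac{4\pi}{c}\sqrt{emn}\right),
\]
which is the same identity that underlies Rademacher's exact formula for the coefficients of $J(\tau)$, recovered here as the case $m=1$, $e=1$, $\Gamma_g=\mathrm{SL}_2(\mathbb Z)$. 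Assembling the sectors $e\in\mathcal W_g$ and the sum over $c$ gives the stated formula.

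The main obstacle is analytic rather than formal: one must justify interchanging the regularized, only conditionally convergent Rademacher sum with the Fourier integral and with the unfolding, and then establish absolute convergence of the resulting double series over $e\in\mathcal W_g$ and $c>0$. For the latter I would combine a Weil-type bound for the Kloosterman sums $K_c(g,e,-m,n)$ with the estimate $I_1(x)\ll x$ as $x\to0$, so that $\sum_{c>0}c^{-1}\lvert K_c\rvert\,I_1\!\bigl(4\pi\sqrt{emn}/c\bigr)$ converges; the regularization and the validity of the unfolding can be handled exactly as in the proof of Theorem~8.12 of \cite{DGO}, whose analytic treatment of order-$m$ Rademacher sums for the groups $\Gamma_g$ may be invoked directly. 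The genuinely new point is the combinatorial reorganization that repackages these coefficients as a sum over the Atkin--Lehner set $\mathcal W_g$ — this is what makes the formula well adapted to the asymptotic analysis carried out in the remainder of the paper — together with the extra care needed, when $m\ge2$, for the correction terms $\rho_m$, which do not affect the coefficients of positive index beyond their role in the convergence argument.
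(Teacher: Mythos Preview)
Your proposal is correct in its essential structure and shares the same combinatorial heart as the paper's proof: both work directly with a Poincar\'e-type series for $\Gamma_g$ (rather than on $\Gamma_0(N)$ as in Theorem~8.12 of \cite{DGO}), partition the coset sum according to the determinant sectors $e\in\mathcal W_g$ coming from the Atkin--Lehner involutions, and unfold to obtain the modified Kloosterman sum of \eqref{defK} times the standard Bessel integral. The difference lies in the regularization scheme. The paper uses the real-analytic Maass--Poincar\'e series $\mathcal P_s(m,\Gamma_g)$ built from the Whittaker function, which converges absolutely for $\Re(s)>1$; it then follows the computation in \S1.3 of \cite{Brunier} essentially verbatim, noting only that $y\mapsto ey/|c\tau+d|^2$ under a determinant-$e$ matrix, so that Bruinier's quantity $B$ becomes $|m|/(c^2 ey)$, and finally takes $s\to1^+$. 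This route sidesteps all questions of conditional convergence and makes the interchange of sum and integral automatic. Your route via Rademacher sums requires exactly the delicate justifications you flag --- the exchange of the only conditionally convergent limit with the Fourier integral, and a Weil-type bound to control the tail --- and one small remark: the Rademacher regularization in weight~$0$ is not literally ``trivial for $m=1$'' (already for $J(\tau)$ a regularizing subtraction is needed), though as you say this does not affect the positive-index coefficients. Both arguments yield the same formula; the paper's is shorter because it outsources the analysis to \cite{Brunier}, while yours is more self-contained but must redo that analysis by hand.
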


From well-known asymptotics of the Bessel function and explicit computations of Kloosterman sums, we arrive at the following asymptotic formula for these coefficients.
 \begin{Theorem} \label{core} Let $g \in \mathbb{M}$ and let $\Gamma_g = N||h+\mathcal{W}_g$ as listed in the appendix. If $\varepsilon = \mathrm{max} (\mathcal{W}_g)$, then as $n \rightarrow \infty$, we have
 \[c_g(-m, n) \sim \frac{(m\varepsilon)^{1/4}}{\sqrt{2N}n^{3/4}}\cdot K_N(g,\varepsilon,-m,n)\cdot \mathrm{exp}\left(\frac{4\pi \sqrt{\varepsilon mn}}{N}\right),\]
with the following exceptions:
\begin{enumerate}
\item If $g$ is in conjugacy class $\mathrm{8D}$ or $\mathrm{8E}$, $m$ is odd, and $n \equiv -m \pmod 4$, then
\[c_g(-m, n) \sim \frac{m^{1/4}}{4\sqrt{2}n^{3/4}} \cdot K_{16}(g, 1, -m, n) \cdot \mathrm{exp}\left(\frac{4\pi \sqrt{mn}}{16}\right).\]
\item If $g$ is in conjugacy class $\mathrm{16B}$, $m \equiv 2 \pmod 4$, and $n \equiv -m \pmod 8$, then
\[c_g(-m,n) \sim \frac{m^{1/4}}{8n^{3/4}} \cdot K_{32}(g,1,-m,n) \cdot \mathrm{exp}\left(\frac{4\pi\sqrt{mn}}{32}\right)\]
\item If $g$ is in conjugacy class $\mathrm{18A}$, $m \not\equiv 0 \pmod 3$, and $n \equiv -m \pmod 3$, then
\[c_g(-m,n) \sim \frac{m^{1/4}}{6n^{3/4}}\cdot K_{18}(g,1,-m,n)\cdot \mathrm{exp}\left(\frac{4\pi \sqrt{mn}}{18}\right).\]
\item If $g$ is in conjugacy class $\mathrm{24D}$, $m$ is odd, and $n \equiv m \pmod 4$, then
\[c_g(-m, n) \sim \frac{m^{1/4}}{4\sqrt{3}n^{3/4}}\cdot K_{24}(g,1,-m,n) \cdot \mathrm{exp}\left(\frac{4\pi\sqrt{mn}}{24}\right).\]
\end{enumerate}
 Hence, the list of conjugacy classes in the appendix beginning 
 \[\mathrm{1A, 2A, 3A, \{2B, 4A\}, 5A, \{6A, 6B\}, 7A, (4B, 8A), (3C, 3B, 9A), \ldots}\]
orders the order $m$ McKay-Thompson series by the asymptotic magnitude of their coefficients, where we remove a conjugacy class if $K_N(g, \varepsilon, -m, n)$, which is periodic in $m$ and $n$ modulo $N/\varepsilon$, vanishes.
 \end{Theorem}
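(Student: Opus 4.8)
The plan is to read the asymptotics directly off the exact formula of Theorem~\ref{Th1}, using the classical expansion $I_1(x)=e^{x}(2\pi x)^{-1/2}\bigl(1+O(x^{-1})\bigr)$ as $x\to\infty$. With it, the $(e,c)$-summand of Theorem~\ref{Th1} equals $\exp(4\pi\sqrt{emn}/c)$ times a factor which---for $e$, $c$, $m$ fixed and fixed residues of $n$---is of order $n^{-3/4}$, so up to such polynomial factors the summands are ranked by the exponent $4\pi\sqrt{emn}/c$. The first step is to record, from the shape $\Gamma_g=N\|h+\mathcal{W}_g$ and the definition \eqref{defK}, which moduli $c$ carry a nonzero Kloosterman sum $K_c(g,e,-m,n)$: for every $e\in\mathcal{W}_g$ the smallest admissible $c$ is $N$, and $K_N(g,e,-m,n)$ is then a finite exponential sum depending on $m$ and $n$ only modulo $N/e$, which gives the asserted periodicity. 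Hence the exponent $4\pi\sqrt{emn}/c$ is largest, and uniquely so, for $(e,c)=(\varepsilon,N)$.

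Isolating that summand, I would substitute $x=4\pi\sqrt{\varepsilon mn}/N$ into the Bessel expansion; the $(\varepsilon,N)$-term of Theorem~\ref{Th1} then becomes
\begin{align*}
&2\pi\sqrt{\tfrac{\varepsilon m}{n}}\cdot\frac{K_N(g,\varepsilon,-m,n)}{N}\cdot\frac{\sqrt N\,e^{x}}{2\sqrt2\,\pi(\varepsilon mn)^{1/4}}\bigl(1+O(x^{-1})\bigr)\\
&\qquad=\frac{(m\varepsilon)^{1/4}}{\sqrt{2N}\,n^{3/4}}\,K_N(g,\varepsilon,-m,n)\,e^{x}\bigl(1+O(x^{-1})\bigr),
\end{align*}
which is the asserted main term. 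It remains to check that every other summand is of strictly smaller exponential order. For this I would use the bounds on the Kloosterman sums and the convergence estimates for the $c$-sum established in \cite{DGO}: for $e=\varepsilon$ the next admissible modulus exceeds $N$, and for $e<\varepsilon$ even $c=N$ already has the smaller exponent $4\pi\sqrt{emn}/N$, so the remaining terms contribute $O\bigl(n^{-3/4}\exp\bigl((1-\delta)\tfrac{4\pi\sqrt{\varepsilon mn}}{N}\bigr)\bigr)$ for some $\delta>0$. This yields the generic asymptotic whenever $K_N(g,\varepsilon,-m,n)\neq0$.

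The four exceptions occur precisely when $K_N(g,\varepsilon,-m,n)$---and, more strongly, every summand whose exponent exceeds $4\pi\sqrt{mn}/N_0$ for the relevant $N_0\in\{16,32,18,24\}$---vanishes identically on the stated congruence classes of $m$ and $n$. For $g$ in each of $\mathrm{8D}$, $\mathrm{8E}$, $\mathrm{16B}$, $\mathrm{18A}$, $\mathrm{24D}$, I would compute the relevant $K_c(g,e,-m,n)$ directly from \eqref{defK}, using the explicit Atkin--Lehner set $\mathcal{W}_g$ and the multiplier system of $\Gamma_g$ from the appendix; once the congruences on $m$ and $n$ are imposed, the vanishing reduces to elementary identities for the resulting Gauss and Kloosterman sums. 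The first surviving summand is then the one with $e=1$ and $c=N_0$, and feeding it through the same Bessel computation---with $\varepsilon$ replaced by $1$ and $N$ by $N_0$---produces the four displayed formulas, after one checks that this surviving Kloosterman sum is itself nonzero on the given classes and again dominates all further summands.

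Finally, the ordering of the order $m$ McKay-Thompson series follows by sorting the conjugacy classes by the exponent $4\pi\sqrt{\varepsilon mn}/N$ of their leading term---equivalently by the parameter $\varepsilon/N^{2}$ read off from the appendix---breaking ties first by the prefactor $(m\varepsilon)^{1/4}/\sqrt{2N}$ and then by the magnitude of $K_N(g,\varepsilon,-m,n)$ on the residues at hand; the braced and parenthesized blocks record exactly these ties, and a class is dropped from a slot precisely when its governing $K_N(g,\varepsilon,-m,n)$ is forced to vanish there, its true leading behavior then being smaller. The main obstacle is the case analysis behind the four exceptions: verifying that \emph{several} Kloosterman sums vanish simultaneously on the indicated congruences requires the explicit multiplier systems of the appendix, not merely a Weil-type bound, and is where the real work of the proof is concentrated.
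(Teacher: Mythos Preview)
Your derivation of the main asymptotic when $K_N(g,\varepsilon,-m,n)\neq 0$ matches the paper's exactly, and for the four listed exceptions your plan is correct: in each case only a single Kloosterman sum (at $c=N$, with $e=\varepsilon$) has exponent larger than the surviving term, so one finite computation from \eqref{defK} suffices.

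However, there is a genuine gap. You write that ``the four exceptions occur precisely when $K_N(g,\varepsilon,-m,n)\ldots$ vanishes identically on the stated congruence classes,'' but this is not so: $K_N(g,\varepsilon,-m,n)$ vanishes on arithmetic progressions for \emph{many} classes that are not among the four exceptions --- for instance $\mathrm{4C}$ when $n\not\equiv -m\pmod 2$, or $\mathrm{9B}$ when $n\not\equiv -m\pmod 3$, and the dozen or so further classes recorded in Lemma~\ref{2.2}. For the theorem's ordering to be valid (``remove a conjugacy class if $K_N(g,\varepsilon,-m,n)$ vanishes''), one must know that in all of these non-exceptional vanishing cases $c_g(-m,n)$ is actually zero, not merely of smaller exponential order. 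Your proposal gives no mechanism for this, and the one you suggest --- showing that the individual summands in Theorem~\ref{Th1} vanish --- cannot work: there are infinitely many moduli $c=kN$ to check, and in any case $c_g(-m,n)=0$ does not force each $K_{kN}(g,e,-m,n)$ to vanish separately.

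The paper handles this by a completely different argument that never touches the higher Kloosterman sums: it proves $c_g(-m,n)=0$ directly from modularity. One shows that the sieved series $(T_g^{(-m)}\mid S_{\ell,-m})(\tau)$ is still modular on $\Gamma_g$ with the same principal part at infinity and no new poles, hence equals $T_g^{(-m)}(\tau)$; this forces the Fourier coefficients off the progression $n\equiv -m\pmod\ell$ to vanish (Lemma~\ref{ding}). The residual cases for $\mathrm{8D}$, $\mathrm{8E}$, $\mathrm{16B}$, $\mathrm{18A}$, $\mathrm{24D}$ with $m$ even (or $m\equiv 0\pmod 3$ for $\mathrm{18A}$) require in addition the identities of Lemma~\ref{ids} relating these higher-order McKay--Thompson series to Hauptmoduln on smaller levels, and an induction on $m$ (Lemma~\ref{more}); the classes with $h>1$ are covered by Lemma~\ref{dong} using the multiplier $\sigma_g$. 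Without these modularity-based vanishing lemmas your argument does not establish the claimed ordering, so you should either supply them or indicate where in your outline they would enter.
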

 
 
The ordering of conjugacy classes in Theorem \ref{core} can be interpreted as an order for contributions of the corresponding columns in the character table to the multiplicities of irreducible representations in $V_n^{(-m)}$ as $n \rightarrow \infty$. Let $M_i$, for $1 \leq i \leq 194$, denote the 194 inequivalent irreducible representations of the monster group, labeled so that the character of $M_i$ is the function denoted $\chi_i$ in \cite{57}. In addition, let $\mathbf{m}_i(-m,n)$ be the multiplicity of $M_i$ in $V_n^{(-m)}$ so that
\[V_n^{(-m)} \simeq \bigoplus_{i=1}^{194} M_i^{\oplus \mathbf{m}_i(-m,n)}.\]
In \cite{DGO}, the authors prove that as $n \rightarrow \infty$,
 \begin{align} \label{b}
 \mathbf{m}_i(-m,n) &\sim \frac{\dim(\chi_i)m^{1/4}}{\sqrt{2} n^{3/4} |\mathbb{M}|} \cdot e^{4\pi \sqrt{mn}} .
 \end{align}
In particular, the limit
 \begin{align*}
 \delta(\mathbf{m}_i(-m)) := \lim_{n \rightarrow \infty} \frac{\mathbf{m}_i(-m,n)}{\sum_{i=1}^{194} \mathbf{m}_i(-m,n)} 
  \end{align*}
 exists and is given by
 \begin{equation} \label{DGOcor}
 \delta(\mathbf{m}_i(-m)) =\frac{\dim(\chi_i)}{\sum_{j=1}^{194}\dim(\chi_j)} = \frac{\dim(\chi_i)}{5844076785304502808013602136}.
 \end{equation}
 
Another way to phrase this result is that, as $n$ tends to infinity, $V_n^{(-m)}$ tends to direct sums of copies of the regular representation. 
Viewing $V^{(-m)}$ as a module over the group ring $\mathbb{Z}[\mathbb{M}]$, it is thus natural to decompose $V_n^{(-m)}$ into a free part and a non-free part and ask what can be said about the distribution of irreducible representations in the non-free part (see Problem 10.9 in \cite{DGO}).
Define $\mathbf{nf}_i(-m,n)$ to be the multiplicity of $M_i$ in the non-free part of $V_n^{(-m)}$  so that
 \[V_n^{(-m)} \simeq \mathbb{Z}[\mathbb{M}]^{\oplus \mathbf{f}(-m,n)} \oplus \bigoplus_{i=1}^{194}M_i^{\oplus \mathbf{nf}_i(-m,n)}\]
where $\mathbf{f}(-m,n)$ is maximal.
\begin{remark}
The results of \cite{DGO} show that as $n \rightarrow \infty$, we have
\[\mathbf{f}(-m,n) \sim \frac{m^{1/4}}{\sqrt{2}n^{3/4}|\mathbb{M}|}\cdot e^{4\pi \sqrt{mn}}
.\]
\end{remark}

Here, we use the order in Theorem \ref{core} to find that
the column of the monster's character table corresponding to conjugacy class 2A dictates the asymptotic non-free distributions of $V_n^{(-m)}$.
Writing $\chi_i(\mathrm{2A})$ for the value of the character of $M_i$ on an element in conjugacy class 2A and $|\mathrm{2A}|$ for the size of this conjugacy class, our precise result can be stated as follows.
 \begin{Theorem} \label{nf}
 For any $1 \leq i \leq 194$, as $n \rightarrow \infty$, we have
  \[ \mathbf{nf}_i(-m,n) \sim 
\left( \frac{\dim(\chi_i)}{3301375}+\chi_i(\mathrm{2A})\right)\cdot \frac{|\mathrm{2A}| m^{1/4}}{|\mathbb{M}|(2n)^{3/4}}\cdot e^{2\pi\sqrt{2mn}}.\]
\end{Theorem}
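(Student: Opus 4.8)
The plan is to peel off the two leading contributions to the multiplicity $\mathbf{m}_i(-m,n)$ coming from the identity class $\mathrm{1A}$ and the class $\mathrm{2A}$, and to observe that the first of these cancels exactly when we subtract the free part. By orthogonality of characters,
\[
\mathbf{m}_i(-m,n) \;=\; \frac{1}{|\mathbb{M}|}\sum_{g\in\mathbb{M}}\overline{\chi_i(g)}\,c_g(-m,n) \;=\; \sum_{[g]}\frac{|[g]|}{|\mathbb{M}|}\,\overline{\chi_i(g)}\,c_g(-m,n),
\]
the sum running over conjugacy classes $[g]$. The class $\mathrm{1A}$ contributes $\tfrac{\dim(\chi_i)}{|\mathbb{M}|}c(-m,n)$, where $c(-m,n):=\dim(V_n^{(-m)})=c_{\mathrm{1A}}(-m,n)$, and the class $\mathrm{2A}$ contributes $\tfrac{|\mathrm{2A}|\,\chi_i(\mathrm{2A})}{|\mathbb{M}|}c_{\mathrm{2A}}(-m,n)$, using that $\chi_i(\mathrm{2A})\in\mathbb{Z}$ since $\mathrm{2A}$ is a rational class. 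By the ordering in Theorem \ref{core}, $\mathrm{2A}$ is the second class in the list and is never removed (its Kloosterman factor is constant in $m,n$ and nonzero), so every remaining class $g$ satisfies $c_g(-m,n)=O\!\big(n^{-3/4}e^{r\sqrt{mn}}\big)$ for some fixed $r<2\sqrt{2}\,\pi$ (the largest relevant exponent being that of $\mathrm{3A}$). As there are finitely many classes and $|\chi_i(g)|\le\dim(\chi_i)$,
\[
\mathbf{m}_i(-m,n) = \frac{\dim(\chi_i)}{|\mathbb{M}|}\,c(-m,n) + \frac{|\mathrm{2A}|\,\chi_i(\mathrm{2A})}{|\mathbb{M}|}\,c_{\mathrm{2A}}(-m,n) + O\!\Big(\frac{e^{r\sqrt{mn}}}{n^{3/4}}\Big).
\]
Applying Theorem \ref{core} to $\mathrm{2A}$, whose group is $2+$ so that $N=\varepsilon=2$, and computing $K_2(\mathrm{2A},2,-m,n)=1$, gives $c_{\mathrm{2A}}(-m,n)\sim \tfrac{m^{1/4}}{(2n)^{3/4}}e^{2\pi\sqrt{2mn}}>0$, which dominates the error term.

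For the free part, $\mathbf{f}(-m,n)$ is the largest integer $k$ with $\mathbf{m}_i(-m,n)\ge k\dim(\chi_i)$ for every $i$, i.e. $\mathbf{f}(-m,n)=\big\lfloor\min_{1\le i\le 194}\mathbf{m}_i(-m,n)/\dim(\chi_i)\big\rfloor$. Writing $A(n)=c(-m,n)/|\mathbb{M}|$ and $B(n)=\tfrac{|\mathrm{2A}|}{|\mathbb{M}|}c_{\mathrm{2A}}(-m,n)>0$, the expansion above gives, uniformly over the finite index set,
\[
\frac{\mathbf{m}_i(-m,n)}{\dim(\chi_i)} = A(n) + \frac{\chi_i(\mathrm{2A})}{\dim(\chi_i)}\,B(n) + o\big(B(n)\big).
\]
Because $B(n)\to\infty$ and $\chi_i(\mathrm{2A})/\dim(\chi_i)$ takes only finitely many values, for all large $n$ the minimum is attained among the indices realizing $\mu:=\min_i \chi_i(\mathrm{2A})/\dim(\chi_i)$, so $\min_i \mathbf{m}_i(-m,n)/\dim(\chi_i)=A(n)+\mu B(n)+o(B(n))$; the floor changes this by $O(1)=o(B(n))$. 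Subtracting $\dim(\chi_i)\,\mathbf{f}(-m,n)$ from $\mathbf{m}_i(-m,n)$, the $A(n)$-terms cancel and
\[
\mathbf{nf}_i(-m,n) = \big(\chi_i(\mathrm{2A}) - \mu\,\dim(\chi_i)\big)\,B(n) + o\big(B(n)\big).
\]

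The remaining ingredient is the character-table computation $\mu=\min_i \chi_i(\mathrm{2A})/\dim(\chi_i)=-\tfrac{1}{3301375}$, verified by scanning the $\mathrm{2A}$-column of \cite{57}. Substituting this value of $\mu$ and $B(n)\sim \tfrac{|\mathrm{2A}|\,m^{1/4}}{|\mathbb{M}|(2n)^{3/4}}e^{2\pi\sqrt{2mn}}$ gives exactly the stated formula; for the (finitely many) indices $i$ with $\chi_i(\mathrm{2A})/\dim(\chi_i)=\mu$ the leading coefficient vanishes, which is the content of the statement in those cases, with $\mathbf{nf}_i(-m,n)=o\big(n^{-3/4}e^{2\pi\sqrt{2mn}}\big)$.

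I expect the main obstacle to be not a single hard estimate but the uniformity bookkeeping: one must ensure that the subleading asymptotics from Theorem \ref{core} are uniform enough across the $194$ irreducible representations and the remaining conjugacy classes that the floor and all contributions below $\mathrm{2A}$ (led by $\mathrm{3A}$) really are absorbed into $o(B(n))$, and that $c_{\mathrm{2A}}(-m,n)$ is eventually positive, so that the minimizing index is selected by $\min_i\chi_i(\mathrm{2A})/\dim(\chi_i)$ rather than its maximum. It is precisely the ordering of Theorem \ref{core} together with the non-vanishing of $K_2(\mathrm{2A},2,-m,n)$ that singles out the $\mathrm{2A}$-column, and no other, as the one governing $\mathbf{nf}_i(-m,n)$.
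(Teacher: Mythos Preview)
Your proof is correct and follows essentially the same route as the paper's: both start from the orthogonality expression \eqref{ft}, use Theorem \ref{core} to isolate the $\mathrm{1A}$ and $\mathrm{2A}$ contributions, compute $K_2(\mathrm{2A},2,-m,n)=1$ to obtain $c_{\mathrm{2A}}(-m,n)\sim m^{1/4}(2n)^{-3/4}e^{2\pi\sqrt{2mn}}$, and identify $\mu=\min_i\chi_i(\mathrm{2A})/\dim(\chi_i)=-1/3301375$ (attained at $\chi_{16},\chi_{17}$) as the quantity governing how much of the regular representation can be stripped off. Your write-up is in fact more explicit than the paper's about the mechanics of $\mathbf{f}(-m,n)=\lfloor\min_i\mathbf{m}_i(-m,n)/\dim(\chi_i)\rfloor$, the uniformity in $i$, the positivity of $c_{\mathrm{2A}}(-m,n)$, and the degenerate cases where the leading coefficient vanishes; the paper leaves these points implicit.
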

Thus the following limit is well-defined
\[\delta(\mathbf{nf}_i(-m)) := \lim_{n \rightarrow \infty} \frac{\mathbf{nf}_i(-m,n)}{\sum_{i=1}^{194}\mathbf{nf}_i(-m,n)}.
\]
\begin{Corollary} \label{1.4}
In particular, as $n \rightarrow \infty$, we have that
\[\delta(\mathbf{nf}_i(-m)) = \frac{\frac{\dim(\chi_i)}{3301375}+\chi_i(\mathrm{2A})}{\sum_{j=1}^{194}\frac{\dim(\chi_j)}{3301375}+\chi_j(\mathrm{2A})} = \frac{\dim(\chi_i)+3301375\chi_i(\mathrm{2A})}{5845054856224474627181019136}.\]
 \end{Corollary}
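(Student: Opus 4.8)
The plan is to read off Corollary \ref{1.4} from Theorem \ref{nf} by dividing the asymptotic for $\mathbf{nf}_i(-m,n)$ by the corresponding sum over $i$. For brevity set
\[
a_i := \frac{\dim(\chi_i)}{3301375} + \chi_i(\mathrm{2A}) \qquad\text{and}\qquad g(n) := \frac{|\mathrm{2A}|\, m^{1/4}}{|\mathbb{M}|\,(2n)^{3/4}}\, e^{2\pi\sqrt{2mn}},
\]
so that Theorem \ref{nf} asserts $\mathbf{nf}_i(-m,n) \sim a_i\, g(n)$ for each $1 \le i \le 194$, with $g(n) > 0$ for all $n \ge 1$. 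Since there are only finitely many indices $i$, one may add these equivalences to get $\sum_{i=1}^{194}\mathbf{nf}_i(-m,n) \sim \big(\sum_{i=1}^{194} a_i\big)\, g(n)$ --- but \emph{only} once we know that the total coefficient $A := \sum_{i=1}^{194} a_i$ is nonzero. If $A$ vanished there could be cancellation among the leading terms, the sum would grow strictly more slowly than $g(n)$, and the ratio defining $\delta(\mathbf{nf}_i(-m))$ would have to be analysed at the next order; so the one genuine point to verify is $A \neq 0$.

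This is immediate from Theorem \ref{nf} itself: each $\mathbf{nf}_i(-m,n)$ is a nonnegative integer and $g(n) > 0$, so the relation $\mathbf{nf}_i(-m,n) \sim a_i\, g(n)$ forces $a_i \ge 0$ for every $i$ (a negative $a_i$ would make $\mathbf{nf}_i(-m,n) < 0$ for all large $n$). For the trivial representation $M_1$ we have $\dim(\chi_1) = \chi_1(\mathrm{2A}) = 1$, hence $a_1 = \tfrac{1}{3301375} + 1 > 0$, and therefore $A \ge a_1 > 0$. Equivalently, multiplying through by $3301375$ and summing the relevant column of the monster's character table from \cite{57} gives $3301375\,A = \sum_{i=1}^{194}\big(\dim(\chi_i) + 3301375\,\chi_i(\mathrm{2A})\big) = 5845054856224474627181019136$, which is manifestly positive and is exactly the constant appearing in the statement.

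With $A > 0$ in hand, the limit defining $\delta(\mathbf{nf}_i(-m))$ exists and
\[
\delta(\mathbf{nf}_i(-m)) = \lim_{n\to\infty}\frac{\mathbf{nf}_i(-m,n)}{\sum_{j=1}^{194}\mathbf{nf}_j(-m,n)} = \lim_{n\to\infty}\frac{a_i\, g(n)}{A\, g(n)} = \frac{a_i}{A} = \frac{\frac{\dim(\chi_i)}{3301375}+\chi_i(\mathrm{2A})}{\sum_{j=1}^{194}\big(\frac{\dim(\chi_j)}{3301375}+\chi_j(\mathrm{2A})\big)},
\]
and clearing the common factor $3301375$ from numerator and denominator rewrites this as $\dfrac{\dim(\chi_i) + 3301375\,\chi_i(\mathrm{2A})}{5845054856224474627181019136}$, as claimed. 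All the substance of the corollary lies in Theorem \ref{nf}; the only step beyond formal manipulation of the relation $\sim$ is the non-vanishing of the denominator $A$, which, as just seen, follows with essentially no extra work.
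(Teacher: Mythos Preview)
Your argument is correct and follows the same route as the paper, which treats Corollary \ref{1.4} as an immediate consequence of Theorem \ref{nf} and does not supply a separate proof. Your extra care in verifying that the denominator $A=\sum_j a_j$ is positive (via $a_1>0$ or, equivalently, the explicit character-table sum) is a welcome detail that the paper leaves implicit.
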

 
 \begin{remark}
 It is natural to compare Theorem \ref{nf} and Corollary \ref{1.4} to \eqref{b} and \eqref{DGOcor}. 
The appearance of two character values in our expression for the non-free distribution comes from accounting for negative character values, which are not considered in \cite{DGO}.
 \end{remark}
 
 \begin{remark}
These methods can also be applied to the monster module $W^\natural = \bigoplus_n W_n^\natural$ studied by Duncan, Griffin, and Ono in their answer to a problem of Witten on the distribution of black holes (see Question 6.1 and \S 8 of \cite{DGO}). 
If we let $\mathbf{\widetilde{n}f}_i(n)$ be the multiplicity of $M_i$ in the non-free part of $W_n^\natural$, then as $n \rightarrow \infty$, we have
\[\mathbf{\widetilde{n}f}_i(n) \sim  \left(\frac{\dim(\chi_i)}{330175} +\chi_i(\mathrm{2A})\right)\frac{|\mathrm{2A}| \sqrt{12}}{|\mathbb{M}|\sqrt{24n+1}}\cdot e^{\frac{\pi}{6}\sqrt{11(24n+1)}},\]
and so
\[\delta(\mathbf{\widetilde{n}f}_i) = \frac{\dim(\chi_i)+3301375\chi_i(\mathrm{2A})}{5845054856224474627181019136},\]
providing a refinement to these distribution results.
 \end{remark}
 
 The following table, which should be compared with Table 1 in \cite{DGO}, illustrates the asymptotics of Corollary \ref{1.4} for $\chi_1, \chi_{2}, \chi_{17}$ and $\chi_{194}$ when $m=1$. Let $\delta(\mathbf{nf}_i(-1,n))$ be the proportion of irreducible representations corresponding to $\chi_i$ in the non-free part of $V_n^{(-1)} = V_n^\natural$.
 \begin{center}
\begin{tabular}{|c|c|c|c|c|} 
\hline
\rule{0pt}{2.5ex} $n$ & $\delta(\mathbf{nf}_1(-1, n))$ &$ \delta(\mathbf{nf}_{2}(-1, n))$ & $ \delta(\mathbf{nf}_{17}(-1, n))$&$ \delta(\mathbf{nf}_{194}(-1, n))$  \\[0.3ex]
\hline
\rule{0pt}{2.2ex}-1 & 1 & 0 & 0 & 0\\
0 & 0 & 0 & 0 & 0\\
1 & 1/2 & 1/2 & 0 & 0\\
2 & 1/3 & 1/3 & 0 &0 \\
$\vdots$ & $\vdots$ & $\vdots$ & $\vdots$ & $\vdots$ \\ 
40 & $4.011 \ldots \times 10^{-4}$ & $2.514\ldots \times 10^{-3}$ & 0 & $0.00891\ldots$ \\
80 & $4.809\ldots \times 10^{-14}$ & $7.537\ldots \times 10^{-13}$ & $1.411\ldots \times 10^{-12}$& $0.04428\ldots$ \\
120 & $1.329\ldots \times 10^{-17}$ & $5.304\ldots \times 10^{-16}$ & $4.256\ldots \times 10^{-13}$ & $0.04428\ldots$ \\
160 & $1.642\ldots \times 10^{-20}$ & $4.972\ldots \times 10^{-18}$ & $3.949\ldots \times 10^{-19}$ &
$0.04428\ldots$ \\
200 & $7.663\ldots \times 10^{-22}$ & $2.572\ldots \times 10^{-18}$ & $6.748\ldots \times 10^{-25}$ &
$0.04428\ldots$ \\
240 & $5.787\ldots \times 10^{-22}$ & $2.478\ldots \times 10^{-18}$ & $1.914\ldots \times 10^{-30}$ & 
$0.04428\ldots$ \\
280 & $5.664\ldots \times 10^{-22}$ & $2.470\ldots \times 10^{-18}$ & $8.932\ldots \times 10^{-36}$ & 
$0.04428\ldots$ \\
320 & $5.650\ldots \times 10^{-22}$ & $2.469\ldots \times 10^{-18}$ & $2.154\ldots \times 10^{-39}$ &
$0.04428\ldots$ \\
360 & $5.648\ldots \times 10^{-22}$ & $2.468 \ldots \times 10^{-18}$ & $7.602\ldots \times 10^{-44}$ &
$0.04428\ldots$  \\
400 & $5.648 \ldots \times 10^{-22}$ & $2.468 \ldots \times 10^{-18}$ & $3.626\ldots \times 10^{-47}$ &
$0.04428\ldots$ \\
$\vdots$ & $\vdots$ & $\vdots$ & $\vdots$ & $\vdots$ \\
$\infty$ & $\frac{403}{713507672878964187888308}$ & $\frac{3523073}{1427015345757928375776616}$ & 0 & $\frac{63189970139661766171875}{1427015345757928375776616}$  \\[1.5ex]
\hline
\end{tabular}
 \end{center}

 \vspace{.15in}
The exact values in the bottom row have the following decimal approximations:
\begin{align*}
\delta(\mathbf{nf}_1(-1)) &= \frac{403}{713507672878964187888308} \approx 5.648\ldots \times 10^{-22} \\
\delta(\mathbf{nf}_2(-1)) &= \frac{3523073}{1427015345757928375776616} \approx 2.468 \times 10^{-18} \\
\delta(\mathbf{nf}_{17}(-1)) &= 0 \\
\delta(\mathbf{nf}_{194}(-1)) &= \frac{63189970139661766171875}{1427015345757928375776616} \approx 0.04428\ldots.
\end{align*}

This paper is organized as follows. In Section \ref{s2}, we recall the description of the moonshine groups and construct Poincar\'e series for these groups which are equal to the McKay-Thompson series. We then find exact expressions for their coefficients following the methods in \cite{BO, Brunier}. Next, in Section \ref{s3}, we study the Kloosterman sums appearing in these exact expressions to arrive at the asymptotics in Theorem \ref{core}. In Section \ref{snf}, we apply the asymptotic formulas to describe the non-free distributions.

\subsection*{Acknowledgements}
This research was started at the 2015 REU at Emory University. The author would like to thank Ken Ono for advising this project, Michael Griffin and Lea Beneish for helpful conversations and suggestions, and the NSF for its support.

\section{McKay-Thompson coefficients}\label{s2}

Recall that $\mathrm{GL}_2(\mathbb{Q})^+$ acts on the upper half-plane $\mathbb{H}$ by fractional linear transformation: for $\gamma = \left(\begin{smallmatrix} a & b \\ c & d \end{smallmatrix}\right) \in \mathrm{GL}_2(\mathbb{Q})^+$ and $\tau \in \mathbb{H}$, we set $\gamma \tau := \frac{a\tau + b}{c\tau + d}$. Let $\Gamma \subset \mathrm{GL}_2(\mathbb{Q})^+$ be a subgroup commensurable with $\mathrm{SL}_2(\mathbb{Z})$.
 A \textit{modular function} for $\Gamma$ is a meromorphic function $f: \mathbb{H} \rightarrow \mathbb{C}$ which is invariant under the action of $\Gamma$, i.e. satisfies $f(\gamma \tau) = f(\tau)$ for all $\gamma \in \Gamma$.

\subsection{The moonshine groups}
We now describe the groups $\Gamma_g$ having Hauptmodules which are the McKay-Thopson series $T_g(\tau)$.
Recall that for a positive integer $N$, the congruence subgroup $\Gamma_0(N) \subseteq \mathrm{SL}_2(\mathbb{Z})$ is defined as those matrices which are upper triangular mod $N$,
\[\Gamma_0(N) := \left\{\left(\begin{matrix} a&b\\c&d\end{matrix}\right)  \in \mathrm{SL}_2(\mathbb{Z}) : c \equiv 0 \pmod N  \right\}.\]
For each exact divisor $e \mid N$ with $(e, N/e) = 1$, the associated \textit{Atkin-Lenher involutions} for $\Gamma_0(N)$ are those matrices of the form
\[W_e = \left(\begin{matrix} ae & b \\ cN & de \end{matrix}\right)\]
with determinant $e$. For each $g \in \mathbb{M}$, we associate a group $E_g$, denoted by a symbol $\Gamma_0(N|h)+e,f, \ldots$ (or just $N|h+e,f\ldots$), where $h$ divides $(N, 24)$ and each $e, f,\ldots$ exactly divides $N/h$. 
This symbol stands for the group of matrices
\[\Gamma_0(N|h)+e,f, \ldots := \left(\begin{matrix} 1/h & 0 \\ 0 & 1 \end{matrix}\right) \langle \Gamma_0(N/h), W_e, W_f, \ldots \rangle \left(\begin{matrix} h & 0 \\ 0 & 1 \end{matrix}\right),\]
where $W_e, W_f,\ldots$ are representative of Atkin-Lehner involutions for $\Gamma_0(N/h)$. We write $\mathcal{W}_g := \{1, e, f, \ldots\}$ for the set corresponding to Atkin-Lehner involutions in $E_g$.
The groups $E_g$ are eigengroups for $T_g(\tau)$, meaning that $T_g(\gamma \tau) = \sigma_g(\gamma) T_g(\tau)$ for $\gamma \in E_g$, where $\sigma_g$ is a group homomorphism from $E_g$ to the $h$th roots of unity. Conway and Norton provide the following explicit description of $\sigma_g$ on generators of $E_g$.

\begin{Lemma}[Conway-Norton] \label{sigma}
With the above notation, we have
\begin{enumerate}
\item $\sigma_g(\gamma) = 1$ if $\gamma \in \Gamma_0(Nh)$ 
\item $\sigma_g(\gamma) = 1$ if $\gamma$ is an Atkin-Lehner involution of $\Gamma_0(Nh)$ inside $E_g$
\item $\sigma_g(\gamma) = e^{\frac{-2\pi i}{h}}$ if $\gamma = \left(\begin{matrix} 1 & 1/h \\ 0 & 1 \end{matrix}\right)$ \label{haha}
\item $\sigma_g(\gamma) = e^{-\lambda_g \frac{2\pi i }{h}}$ if $\gamma =\left(\begin{matrix} 1 & 0 \\ N & 1 \end{matrix}\right)$,
\end{enumerate}
where $\lambda_g$ is $-1$ if $N/h \in \mathcal{W}_g$ and $1$ otherwise.
\end{Lemma}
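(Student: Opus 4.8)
The plan is to establish the four assertions one at a time. The homomorphism $\sigma_g$ is characterized by the eigengroup relation $T_g(\gamma\tau) = \sigma_g(\gamma)\,T_g(\tau)$ for $\gamma \in E_g$, which already forces $\sigma_g$ to be a well-defined homomorphism into the abelian group $\mu_h$ of $h$-th roots of unity with kernel the Hauptmodul group $\Gamma_g$; so the content of the lemma is precisely the evaluation of this character on the indicated elements, and for that I would use the explicit description of the pair $\Gamma_g \subseteq E_g$ from Conway--Norton \cite{CN} (rigorously underwritten by \cite{FLM, FLM2, B}). Throughout, write $T_h := \left(\begin{smallmatrix}1 & 1/h \\ 0 & 1\end{smallmatrix}\right)$ and $L_N := \left(\begin{smallmatrix}1 & 0 \\ N & 1\end{smallmatrix}\right)$ for the two distinguished elements appearing in items (3) and (4).

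For (1) and (2) the claim is that $\Gamma_0(Nh) \subseteq \Gamma_g$ and that every Atkin--Lehner involution $W_e$ of $\Gamma_0(Nh)$ lying in $E_g$ --- equivalently the $W_e$ with $e \in \mathcal{W}_g$ --- also lies in $\Gamma_g$; these are exactly the honest symmetries of the Hauptmodul $T_g$ as opposed to mere eigensymmetries, and their membership in $\Gamma_g$ is part of Conway--Norton's identification of $\Gamma_g$, so $\sigma_g$ is trivial on them. For (3), the element $T_h$ acts on $\mathbb{H}$ by $\tau \mapsto \tau + 1/h$, hence on the Fourier variable by $q \mapsto e^{2\pi i/h}q$; writing $T_g(\tau) = q^{-1} + O(q)$ and comparing the coefficients of $q^{-1}$ on the two sides of $T_g(\tau + 1/h) = \sigma_g(T_h)\,T_g(\tau)$ forces $\sigma_g(T_h) = e^{-2\pi i/h}$ (and incidentally shows that every Fourier exponent of $T_g$ is $\equiv -1 \bmod h$). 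For (4) when $N/h \in \mathcal{W}_g$, so $\lambda_g = -1$: the $\mathrm{diag}(1/h,1)$-conjugate $\widetilde{F} := \left(\begin{smallmatrix}0 & -1/h \\ N & 0\end{smallmatrix}\right)$ of the Fricke involution $W_{N/h}$ of $\Gamma_0(N/h)$ lies in $E_g$, and a one-line matrix computation gives $\widetilde{F}\,T_h\,\widetilde{F}^{-1} = L_N^{-1}$; applying $\sigma_g$ and using that $\mu_h$ is abelian yields $\sigma_g(L_N) = \sigma_g(T_h)^{-1} = e^{2\pi i/h}$, which is $e^{-\lambda_g\cdot 2\pi i/h}$ as asserted.

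The one genuinely delicate point, and the step I expect to be the main obstacle, is (4) in the remaining case $N/h \notin \mathcal{W}_g$, where $\lambda_g = 1$ and one must show $\sigma_g(L_N) = e^{-2\pi i/h}$, the same value as $\sigma_g(T_h)$, even though now no element of $E_g$ interchanges the cusps $\infty$ and $0$, so the conjugation trick above is unavailable. Here I would invoke Conway--Norton's explicit presentation of $T_g$, up to an additive constant, as a generalized eta-product $\prod_{d \mid N/h} \eta(d\tau)^{r_d}$: the exponents $r_d$ are arranged precisely so that this product is multiplied by the \emph{same} scalar $e^{-2\pi i/h}$ under $\tau \mapsto \tau + 1/h$ and under $\tau \mapsto \tau/(N\tau+1)$, a balance that one checks from the Dedekind eta-multiplier system at the two cusps. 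Equivalently, this equality is exactly what the integer $h$ in the symbol $N|h$ encodes, and it has to be read off from the Conway--Norton data rather than deduced formally from the leading Fourier coefficient as in (3). Collecting the four evaluations completes the proof.
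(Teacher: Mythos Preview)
The paper does not prove this lemma at all: it is stated with the attribution ``[Conway--Norton]'' and immediately followed by the definition of $\Gamma_g$ as $\ker\sigma_g$; the result is simply imported from \cite{CN}. There is therefore no ``paper's own proof'' against which to compare your attempt. Your write-up is a genuine proof sketch where the paper merely cites.

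As to the content of your sketch: parts (1)--(3) and the Fricke-conjugation argument for (4) when $N/h\in\mathcal{W}_g$ are correct and clean (your matrix identity $\widetilde{F}\,T_h\,\widetilde{F}^{-1}=L_N^{-1}$ checks out, and abelianity of the target does the rest). For (4) in the non-Fricke case you rightly flag that the value $\sigma_g(L_N)=e^{-2\pi i/h}$ cannot be read off formally and must come from the explicit Conway--Norton data; your appeal to the eta-product description and the Dedekind multiplier at the cusp $0$ is the standard route, though as written it remains a pointer rather than a verification. If you intend this as a self-contained proof you would need to make that last step precise (or, as the paper does, just cite \cite{CN}).
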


The group $\Gamma_g$ is the kernel of $\sigma_g$ inside $E_g$, which is denoted by $\Gamma_0(N||h)+e, f, \ldots$ (or just $N||h+e,f,\ldots$). A complete list of the groups $\Gamma_g$ is in the appendix.

\subsection{Poincar\'e series and exact expressions}
In Section 8.3 of \cite{DGO}, the authors build a nice basis of Maass-Poincar\'e series for $\Gamma_0(N)$ and take certain combinations of them to obtain the McKay-Thompson series. Here, we carry out the alternative construction suggested at the beginning of that section, constructing Poincar\'e series for the groups $\Gamma_g$ directly.

Keeping with the notation of \cite{BO,Brunier,DGO}, let
\[\mathcal{M}_s(w) := M_{0,s-\tfrac{1}{2}}(|w|),\]
where $M_{\nu,\mu}(z)$ is the $M$-Whittaker function. In addition, let $\tau = x+iy$ with $x, y \in \mathbb{R}, y> 0$, and let
\[\phi_s(\tau) := \mathcal{M}_s(4\pi y) e^{2\pi i x}.\]
Given a subgroup $\Gamma \subset \mathrm{GL}_2(\mathbb{Q})^+$ commensurable with $\mathrm{SL}_2(\mathbb{Z})$, let $\Gamma_\infty$ denote the stabilizer of $\infty$ in $\Gamma$. For each positive integer $m$, the Poincar\'e series
\[\mathcal{P}_s(m, \Gamma) := \sum_{M \in \Gamma_\infty\backslash \Gamma} \phi_s(-m \cdot M\tau)\]
converges for $s$ with $\mathrm{Re}(s) > 1$.  In general, these Poincar\'e series are harmonic Maass forms, but when $\Gamma$ has genus zero they are weakly holomorphic. In this case, taking the limit as $s \rightarrow 1+$ along the real axis, we obtain a modular function $\mathcal{P}(m, \Gamma)$ for $\Gamma$ having a Fourier expansion $q^{-m} + O(q)$ at infinity and no other poles.
Since there is a unique such function, we must have
\[T_g^{(-m)}(\tau) = \mathcal{P}(m, \Gamma_g).\]
Thus, we can use the method in \cite{BO, Brunier} for determining coefficients of these Poincar\'e series to prove Theorem \ref{Th1}. For a positive integer $c$, any $g \in \mathbb{M}$, and integers $m$ and $n$, we define the \textit{Kloosterman sum} 
\begin{equation} \label{defK}
K_c(g,e,m,n) := \sum_{\left(\begin{smallmatrix}a&b\\c&d\end{smallmatrix}\right) \in \mathcal{F}_{g,e}(c)} \text{exp}\left(\frac{2\pi i(ma+nd)}{c}\right),
\end{equation}
where the sum ranges over matrices in
\[\mathcal{F}_{g,e}(c) := \left\{\left(\begin{matrix}a&b\\c&d \end{matrix}\right) \in \Gamma_g : 0 \leq a, d < c, \ ad-bc = e\right\}.\]
We can now prove our exact expression for the McKay-Thompson coefficients.

\begin{proof}[Proof of Theorem \ref{Th1}]
We argue exactly as in Section 1.3 of \cite{Brunier}, but using the group $\Gamma_g$ and a slight modification to account for matrices with different determinants. Since the imaginary part of $\tau$ transforms under $\left(\begin{smallmatrix} a & b \\ c & d \end{smallmatrix}\right)$ by $y \mapsto \frac{(ad-bc)y}{|c\tau+d|^2},$ we 
need to include an extra factor of $e$ in front of $y$ for each Atkin-Lehner involution $W_e$ in the sum over $\Gamma_\infty \backslash \Gamma_g/ \Gamma_\infty$ in the last equation on page 31 of \cite{Brunier}. Following through, this results in introducing a sum over $e \in \mathcal{W}_g$, using the modified Kloosterman sum in \eqref{defK}, and changing the quantity $B$ on page 33 to $\frac{|m|}{c^2 ey}$. Plugging this into the equation in terms of $A$ and $B$ on page 33 and carrying out the simplification leading to Proposition 1.10, we obtain the expression in our Theorem \ref{Th1}.
\end{proof}

\section{Asymptotic formulas for coefficients} \label{s3}
First recall the asymptotics of the $I$-Bessel function,
\begin{equation}
I_1(x) \sim \frac{e^x}{\sqrt{2\pi x}}\left(1 - \frac{3}{8x}+\ldots \right).
\end{equation}
Suppose $\Gamma_g = N||h+\mathcal{W}_g$ for some $g \in \mathbb{M}$. The Kloosterman sum $K_c(g, e, -m,n)$ vanishes for all $c \not\equiv 0 \pmod N$ since $\mathcal{F}_{g,e}(c)$ is empty in this case.
Therefore, when it does not vanish, the term dominating the expression for $c_g(-m,n)$ in Theorem \ref{Th1} is
\[2\pi \sqrt{\frac{m\varepsilon}{n}} \cdot  \frac{K_N(g,\varepsilon, -m, n)}{N} \cdot I_1\left(\frac{4\pi}{N}\sqrt{\varepsilon mn}\right) \sim \frac{(m\varepsilon)^{1/4}}{\sqrt{2N}n^{3/4}} \cdot K_N(g,\varepsilon, -m, n) \cdot  \mathrm{exp}\left(\frac{4\pi \sqrt{\varepsilon mn}}{N}\right) , \]
where $\varepsilon = \max (\mathcal{W}_g)$.
Thus, to prove Theorem \ref{core}, it suffices to show that if 
$K_N(g,\varepsilon,-m,n)$ vanishes, then $c_g(-m, n)$ also vanishes or we are in one of the listed exceptions.

The functions $K_N(g,\varepsilon,-m,n)$ are not hard to calculate: using the procedure on page 35 of \cite{DGO} one may evaluate $\sigma_g(\gamma)$ for all $\gamma = \left(\begin{smallmatrix} a&b\\ c&d \end{smallmatrix}\right) \in E_g$ with $0 \leq a, d<c$ and $\det(\gamma) = e$ to find $\mathcal{F}_{g,e}(c)$ and then sum over this finite set.

\begin{remark}
We note that there is a typo on page 35 of \cite{DGO}. The last $b$ in the second bullet point should be an $a$.
\end{remark}

For convenience, we provide a collection of simplified expressions for the Kloosterman sums which allows us to compute them all explicitly and determine when they vanish. Let $\zeta_\ell := e^{2\pi i /\ell}$.

\begin{Lemma} \label{2.2}
Suppose $\Gamma_g = N||h+\mathcal{W}_g$. The following are true:
\begin{enumerate}
\item If $N/h \in \mathcal{W}_g$, then 
\[K_N(g, N/h, -m, n) = \begin{cases} h &\text{if $n \equiv -m \pmod h$} \\ 0 & \text{otherwise.}\end{cases}\]
\item If $h=1$, and $\ell = N/e$ for some $e \in \mathcal{W}_g$, then
\[K_N(g,e,-m,n) = \sum_{a \in (\mathbb{Z}/\ell\mathbb{Z})^\times}\zeta_\ell^{-am+(ae)^{-1}n}.\]
In particular, $K_N(g, \varepsilon, -m, n) \neq 0$ if and only if one of the following holds:
\begin{itemize}
\item $n \equiv -m \pmod \ell$ for $g$ and $\ell$ in the table below
\vspace{.1in}
\begin{center}
\begin{tabular}{|c||c|c|c|c|c|c|c|c|c|}
\hline
\rule{0pt}{2.5ex}g & $\mathrm{4C} $ & $\mathrm{9B}$ & $\mathrm{12E}$ & $\mathrm{12I}$ & $\mathrm{18D}$ & $\mathrm{28C}$ &$\mathrm{36B}$\\
\hline
\rule{0pt}{2.2ex} $\ell$ & $\mathrm{2}$ & $\mathrm{3}$ & $\mathrm{2}$ & $\mathrm{2}$ &$\mathrm{3}$ & $\mathrm{2}$ & $\mathrm{3}$\\[0.2ex]
\hline
\end{tabular}
\end{center}
\vspace{.1in}
\item $g$ is in $\mathrm{8E}$ and $n \equiv m \pmod 4$
\item $g$ is in $\mathrm{16B}$ and $n \equiv -m \pmod 4$ if $m$ is odd or $n \equiv -m \pmod 8$ if $m$ is even
\item $g$ is in $\mathrm{18A}$ and $n \equiv m \pmod 3$.
\end{itemize}
\item If $N/2h \in \mathcal{W}_g$ then
\[K_N(g, N/2h, -m, n) = \begin{cases} (-1)^{(m+n)/h} \cdot h&\text{if $n \equiv -m \pmod h$} \\ 0 & \text{otherwise.} \end{cases}\]
\end{enumerate}
The leading Kloosterman sums that do not fit into one of the above cases are
\begin{align*}
\mathrm{8D} &: & K_8(g, 1, -m, n) &= \zeta_8^{3m+n}+\zeta_8^{m+3n}+\zeta_8^{7m+5n}+\zeta_8^{5m+7n}, \\
\intertext{which is non-zero if and only if $n \equiv m \!\!\! \pmod 4$.}
\mathrm{12G} &: & K_{12}(g, 2, -m, n) &= \zeta_6^{m+n}+\zeta_6^{5m+5n}+\zeta_6^{2m+2n}+\zeta_6^{4m+4n}, \\
\intertext{which is non-zero if and only if $n \equiv -m \!\!\! \pmod 2$.}
\mathrm{15D} &: & K_{15}(g, 1, -m, n) &= \zeta_{15}^{4m+n} + \zeta_{15}^{2m+2n} + \zeta_{15}^{3m+3n} + \zeta_{15}^{m+4n} + \zeta_{15}^{9m+6n}+ \zeta_{15}^{7m+7n} \\
&&&\quad +\zeta_{15}^{6m+9n} + \zeta_{15}^{14m+11n}
 + \zeta_{15}^{12m+12n}+ \zeta_{15}^{13m+13n} + \zeta_{15}^{11m+14n}, \\
\intertext{which is non-zero if and only if $n \equiv -m \!\!\! \pmod 3$.}
\mathrm{24D} &: & K_{24}(g, 3, -m, n) &= \zeta_8^{m+n} + \zeta_8^{3m+3n} + \zeta_8^{5m+5n} + \zeta_8^{7m+7n}, \\
\intertext{which is non-zero if and only if $n \equiv -m \!\!\! \pmod 4$.}
\mathrm{24G} &: & K_{24}(g, 2, -m, n) &= \zeta_{12}^{m+n} + \zeta_{12}^{2m+2n} + \zeta_{12}^{4m+4n} + \zeta_{12}^{5m+5n} + \zeta_{12}^{7m+7n}\\
&&&\quad  + \zeta_{12}^{8m+8n} + \zeta_{12}^{10m+10n} + \zeta_{12}^{11m+11n},
\end{align*}
which is non-zero if and only if $n \equiv -m \pmod 4$.
\end{Lemma}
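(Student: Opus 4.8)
The plan is to compute each Kloosterman sum $K_N(g,e,-m,n)$ directly from its definition \eqref{defK} by enumerating the set $\mathcal{F}_{g,e}(c)$ for $c=N$. Recall that $\gamma=\left(\begin{smallmatrix}a&b\\N&d\end{smallmatrix}\right)$ lies in $\Gamma_g$ precisely when $\gamma\in E_g$ and $\sigma_g(\gamma)=1$; the first condition pins down the allowable residues of $a,d$ modulo the relevant divisors of $N$, and the second is evaluated using Lemma \ref{sigma} by factoring $\gamma$ into the generators $\left(\begin{smallmatrix}1&1/h\\0&1\end{smallmatrix}\right)$, $\left(\begin{smallmatrix}1&0\\N&1\end{smallmatrix}\right)$, the Atkin-Lehner representatives, and elements of $\Gamma_0(Nh)$, as on page 35 of \cite{DGO}. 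Because $K_N$ is periodic in $m$ and $n$ modulo $N/e$, in every case the sum reduces to a finite character sum that can be written down explicitly.

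For parts (1)--(3) I would split according to which Atkin-Lehner involution is maximal. When $N/h\in\mathcal{W}_g$, so $e=\varepsilon=N/h$, the determinant condition $ad-bN=N/h$ together with $0\le a,d<N$ forces $a,d$ into a single residue class structure; a short count shows $\mathcal{F}_{g,N/h}(N)$ consists of exactly $h$ matrices when a congruence $n\equiv-m\pmod h$ is met (the phase $\exp(2\pi i(ma+nd)/N)$ being constant across them and equal to $1$), giving case (1); the $N/2h$ case in (3) is identical except the phase picks up a sign $(-1)^{(m+n)/h}$ from the shift in $a,d$. For the $h=1$ case (2), set $\ell=N/e$; then $\Gamma_g=\Gamma_0(N)+e,\ldots$ has trivial $\sigma_g$, and the matrices in $\mathcal{F}_{g,e}(N)$ are parametrized by $a\in(\mathbb{Z}/\ell\mathbb{Z})^\times$ with $d\equiv(ae)^{-1}\pmod\ell$, yielding the stated Gauss-type sum $\sum_{a\in(\mathbb{Z}/\ell\mathbb{Z})^\times}\zeta_\ell^{-am+(ae)^{-1}n}$. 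Its nonvanishing is then a finite check: for each $g$ in the list the modulus $\ell$ is small (2 or 3, or 4, 8 for the $\mathrm{8E}$, $\mathrm{16B}$ cases), so one evaluates the sum on all residue pairs $(m,n)$ modulo $\ell$ and records when it is nonzero, producing the table and bullet points.

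The remaining five conjugacy classes $\mathrm{8D}$, $\mathrm{12G}$, $\mathrm{15D}$, $\mathrm{24D}$, $\mathrm{24G}$ are those where the leading Atkin-Lehner index does not put them in case (1), (2), or (3) — typically because $h>1$ while the maximal $e$ is a proper divisor of $N/h$, so $\sigma_g$ is nontrivial on the relevant matrices and the sum over $\mathcal{F}_{g,e}(N)$ has more terms with genuinely varying phases. Here I would carry out the explicit enumeration for each class individually, listing the roughly $\phi(N/e)$ contributing matrices together with their $\sigma_g$-values (which must equal $1$), and collecting the resulting exponentials; this gives the displayed formulas. The nonvanishing criterion in each case follows by grouping the roots of unity into conjugate pairs or sub-sums indexed by a common value of $m+n$ modulo $2$, $3$, or $4$, and observing that such a sum of roots of unity vanishes exactly outside the stated congruence.

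The main obstacle is the bookkeeping in evaluating $\sigma_g$ via Lemma \ref{sigma}: for the exceptional classes with $h>1$ one must carefully factor each $\gamma\in E_g$ of determinant $e$ into generators to determine $\sigma_g(\gamma)$, and an error in a single factorization propagates to a wrong Kloosterman sum. I expect no conceptual difficulty beyond this — all sums are finite and the congruence conditions emerge mechanically — but the case analysis is lengthy, and care is needed with the corrected sign in the second bullet of page 35 of \cite{DGO} noted in the preceding remark.
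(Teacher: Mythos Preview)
Your proposal is correct and matches the paper's approach: the paper does not supply a detailed proof of this lemma but rather, in the paragraph preceding it, indicates that these Kloosterman sums are obtained by the same direct enumeration you describe---using the procedure on page 35 of \cite{DGO} and Lemma~\ref{sigma} to determine $\mathcal{F}_{g,e}(N)$ and then summing the finitely many exponentials. Your outline is in fact more explicit than what the paper records, but the method is identical; one small wording slip is that the cardinality of $\mathcal{F}_{g,N/h}(N)$ does not itself depend on the congruence $n\equiv -m\pmod h$---rather, the set always has $h$ elements and it is the \emph{sum of phases} over them that equals $h$ or $0$ according to that congruence.
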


In order to prove Theorem \ref{core}, we need to know that the McKay-Thompson coefficients vanish on certain arithmetic progressions corresponding to the vanishing of the Kloosterman sums. To this end, 
we define a sieving operator which acts on functions $f: \mathbb{H} \rightarrow \mathbb{C}$ with Fourier expansions $f(\tau) = \sum_{n} a(n)q^n$ by
\begin{equation*}
(f |S_{\ell,r})(\tau):= \sum_{n \equiv r \! \! \! \pmod \ell} a(n)q^n =
\frac{1}{\ell}\sum_{k=0}^{\ell-1}\zeta_\ell^{\ell-kr}f\left(\tau + \frac{k}{\ell}\right).
\end{equation*}
In addition, we define the \textit{projection operator} as the sum of this operator over a square class
\[(f| \widetilde{S}_{\ell, r})(\tau) := \sum_{s \in \mathcal{S}_r} (f|S_{\ell, s})(\tau),\]
where $\mathcal{S}_r :=\{r a^2 \pmod \ell : (a, \ell) = 1 \}$.
It is well-known that sieving on Fourier coefficients in an arithmetic progression preserves modularity, although it may change the group on which the function is modular. 
More precisely, if $f(\tau)$ is modular on $\Gamma_0(N)$, then $(f|\widetilde{S}_{\ell,r})(\tau)$ is modular on $\Gamma_0(\lcm(N, \ell^2))$.

We now prove several lemmas that determine which arithmetic progressions certain McKay-Thompson series are supported on.

\begin{Lemma} \label{ding}
For $g$ in one of the conjugacy classes listed below, $T_g^{(-m)}(\tau)$ has coefficients $c_g(-m,n)$ supported on $n \equiv -m \pmod \ell$ where $\ell$ is specified in the table.
\end{Lemma}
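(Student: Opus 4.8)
The plan is to realize each of the relevant McKay-Thompson series $T_g^{(-m)}(\tau)$ as a fixed vector under the action of the projection operator $\widetilde{S}_{\ell, -m}$, and to identify the projected function with another Hauptmodul-type object by exploiting the eigengroup structure described in Lemma \ref{sigma}. The key observation is that sieving a $q$-expansion onto $n \equiv -m \pmod \ell$ is, up to the overall factor $q^{-m}$, the same as projecting onto the trivial residue class, and this operation corresponds to averaging $T_g^{(-m)}$ over translations $\tau \mapsto \tau + k/\ell$. Since $T_g^{(-m)}$ is a modular function for $\Gamma_g = N||h + \mathcal{W}_g$, and $\ell$ in each case of the table is a divisor of $N/h$ (indeed a small prime or $2$), I expect that the translates $T_g^{(-m)}(\tau + k/\ell)$ are permuted (up to roots of unity governed by $\sigma_g$) by the Atkin--Lehner and coset structure, so that the averaged function is again modular on a genus-zero group of the same or smaller level. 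Concretely, I would show $(T_g^{(-m)} | \widetilde{S}_{\ell, -m})(\tau) = T_g^{(-m)}(\tau)$ by verifying that every nonzero coefficient already lies in the target progression.

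The cleanest route to the actual vanishing statement is a direct argument via Theorem \ref{Th1}: the coefficient $c_g(-m,n)$ is a sum over $e \in \mathcal{W}_g$ and $c \equiv 0 \pmod N$ of Kloosterman-weighted Bessel terms, and the Kloosterman sum $K_c(g,e,-m,n)$ is periodic in $n$ modulo $c/e$ (hence modulo $N/e$, and in particular modulo $\ell$ when $\ell \mid N/e$ for all $e \in \mathcal{W}_g$). So it suffices to show that for each admissible $c$ and each $e$, the sum $K_c(g,e,-m,n)$ vanishes whenever $n \not\equiv -m \pmod \ell$. This reduces to a finite character-sum computation over $\mathcal{F}_{g,e}(c)$: one evaluates $\sigma_g(\gamma)$ on matrices $\gamma = \left(\begin{smallmatrix} a & b \\ c & d\end{smallmatrix}\right)$ of determinant $e$ with $0 \le a,d < c$ using the formulas in Lemma \ref{sigma}, groups the contributions, and recognizes the result as $\ell$ times the indicator of $n \equiv -m \pmod \ell$ times a lower-level Kloosterman sum — essentially the mechanism already visible in parts (1) and (3) of Lemma \ref{2.2}. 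Summing over $c$ then forces $c_g(-m,n) = 0$ off the progression.

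I would organize the proof as follows. First, fix $g$ and $\ell$ from the table and recall that $\ell \mid N/h$, so $\Gamma_g$ contains a cusp-width-compatible translation structure at $\infty$ of order divisible by $\ell$. Second, show that for every $c$ with $\mathcal{F}_{g,e}(c) \ne \emptyset$, the map $\gamma \mapsto \gamma \cdot \left(\begin{smallmatrix} 1 & c/\ell \\ 0 & 1\end{smallmatrix}\right)$ (or an analogous shift) gives a fixed-point-free action on $\mathcal{F}_{g,e}(c)$ unless $n \equiv -m \pmod \ell$, under which the exponential phases $\exp(2\pi i(ma+nd)/c)$ in \eqref{defK} rotate by a nontrivial $\ell$-th root of unity, killing the sum; here I must track the $\sigma_g$-twist carefully, since the action must stay inside $\Gamma_g = \ker \sigma_g$, which is exactly where Lemma \ref{sigma}(3)--(4) enters. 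Third, conclude via Theorem \ref{Th1} that $c_g(-m,n) = 0$ for $n \not\equiv -m \pmod \ell$. The main obstacle I anticipate is bookkeeping in the second step: ensuring the translation used to pair up matrices in $\mathcal{F}_{g,e}(c)$ genuinely lands in $\Gamma_g$ (not merely $E_g$) and computing the resulting phase shift uniformly in $c$ and $e$; the genus-zero/Hauptmodul uniqueness could provide a slicker alternative if the combinatorics of the Atkin--Lehner coset action proves unwieldy, but it requires independently identifying the projected function, which is itself some effort. I would present the Theorem \ref{Th1} computation as the primary argument since it is self-contained given Lemma \ref{2.2}'s methods.
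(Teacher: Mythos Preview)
Your proposal takes a genuinely different route from the paper. The paper's proof is precisely the ``slicker alternative'' you mention in passing and then set aside: it shows that $(T_g^{(-m)}|S_{\ell,-m})(\tau)$ is again modular on $\Gamma_g$ (since $\ell^2 \mid N$ in every case of the table, sieving does not enlarge the level, and one checks the Atkin--Lehner involutions are preserved), has the same principal part $q^{-m}$ at $\infty$, and acquires no new poles at the other cusps; uniqueness of the weakly holomorphic function with those principal parts then forces $(T_g^{(-m)}|S_{\ell,-m}) = T_g^{(-m)}$. This is a three-line argument once the sieving operator is set up, and it does \emph{not} require identifying the projected function with some other Hauptmodul --- only with $T_g^{(-m)}$ itself. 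Your concern that this route ``requires independently identifying the projected function'' is misplaced.

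Your primary approach --- proving $K_c(g,e,-m,n)=0$ for \emph{every} admissible $c$ --- can be made to work, but two points need repair. First, the sentence ``I would show $(T_g^{(-m)}|\widetilde S_{\ell,-m})(\tau)=T_g^{(-m)}(\tau)$ by verifying that every nonzero coefficient already lies in the target progression'' is circular: that is the statement of the lemma. Second, the specific map $\gamma\mapsto\gamma\left(\begin{smallmatrix}1 & c/\ell\\ 0 & 1\end{smallmatrix}\right)$ shifts $d$ by a multiple of $c$, hence does not change the phase $e^{2\pi i(ma+nd)/c}$ at all. What actually works (at least for the $h=1$ classes in the table) is the substitution $a\mapsto a(1+c/\ell)$ on $(\mathbb Z/c\mathbb Z)^\times$; since $\ell^2\mid c$ one checks $1+c/\ell$ is a unit of order $\ell$, and the resulting phase twist over each orbit is a full set of $\ell$-th roots of unity exactly when $n\not\equiv -m\pmod\ell$. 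Carrying this through for every $e\in\mathcal W_g$ in the cases $12\mathrm E$, $28\mathrm C$, $36\mathrm B$ is doable but is considerably more bookkeeping than the paper's uniqueness argument.
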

\begin{center}
\begin{tabular}{|c||c|c|c|c|c|c|c|c|c|c|}
\hline
\rule{0pt}{2.5ex}$g$ & $\mathrm{4C} $ & $\mathrm{8E}$ & $\mathrm{9B}$ & $\mathrm{12E}$ & $\mathrm{12I}$ & $\mathrm{16B}$ & $\mathrm{18D}$ & $\mathrm{28C}$ &$\mathrm{36B}$\\
\hline
\rule{0pt}{2.2ex} $\ell$ & 2 & 2 & 3 & 2 & 2 & 4 &3 & 2 & 3\\[0.2ex]
\hline
\end{tabular}
\end{center}

\begin{proof}
Note that for these $\ell$ we have $\widetilde{S}_{\ell, r} = S_{\ell, r}$. We observe that for the given $\ell$, the function $(T^{-m}_g|S_{\ell, -m})(\tau)$ is again modular on $\Gamma_g$. Then we need only check that the principal part of the Fourier expansion at each cusp is unaffected. The principal part at infinity is already supported on this arithmetic progression, and the projection operator cannot introduce poles at the other cusps if there are no others to begin with. Hence, $(T^{-m}_g|S_{\ell, -m})(\tau) = T^{-m}_g(\tau)$ so the coefficients are supported on the claimed arithmetic progression.
\end{proof}

We need additional results for $g$ in conjugacy class 8D, 8E, 16B, 18A, or 24D when $m$ is even.
 For ease of notation, let $T_{\mathrm{8D}}^{(-m)}(\tau), T_{\mathrm{8E}}^{(-m)}(\tau)$, etc.~denote $T_g^{(-m)}(\tau)$ for $g$ in the specified conjugacy class.
 
 \begin{Lemma} \label{ids}
 The following identities relating order $m$ McKay-Thompson series are true:
 \begin{align*}
 T_{\mathrm{8D}}^{(-2)}(\tau) = T_{\mathrm{8E}}^{(-2)}(\tau) &= T_{\mathrm{4C}}(2\tau)
& T_{\mathrm{16B}}^{(-2)}(\tau) &= T_{\mathrm{8E}}(2\tau) \\
 T_{\mathrm{8D}}^{(-4)}(\tau) = T_{\mathrm{8E}}^{(-4)}(\tau) &= T_{\mathrm{2B}}(4\tau) 
 & T_{\mathrm{16B}}^{(-4)}(\tau) &= T_{\mathrm{4C}}(4\tau)\\
  T_{\mathrm{24D}}^{(-2)}(\tau) &= T_{\mathrm{12E}}(2\tau)
 & T_{\mathrm{16B}}^{(-8)}(\tau) &= T_{\mathrm{2B}}(8\tau) \\
  T_{\mathrm{24D}}^{(-4)}(\tau) &= T_{\mathrm{6C}}(4\tau)
 & T_{\mathrm{18A}}^{(-3)}(\tau) &= T_{\mathrm{6D}}(3\tau)
 \end{align*}
 \end{Lemma}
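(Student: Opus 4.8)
The plan is to identify both sides of each identity with the Poincar\'e series $\mathcal{P}(m,\Gamma_g)$, using the uniqueness recorded in Section \ref{s2}: for a genus-zero moonshine group $\Gamma$, the series $\mathcal{P}(m,\Gamma)$ is the \emph{unique} modular function on $\Gamma$ that is holomorphic on $\mathbb{H}$, holomorphic at every cusp other than $\infty$, and has Fourier expansion $q^{-m}+O(q)$ at $\infty$; and $T_g^{(-m)}(\tau)=\mathcal{P}(m,\Gamma_g)$. Each identity in the lemma has the form $T_g^{(-m)}(\tau)=T_h(m\tau)$ for an appropriate class $h$ --- note the scaling factor is always the order $m$ --- so it is enough, for each identity, to check that $T_h(m\tau)$, regarded as a function of $\tau$, is a modular function on $\Gamma_g$ with exactly those three properties. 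Uniqueness then forces $T_h(m\tau)=T_g^{(-m)}(\tau)$, and the repeated equalities $T_{\mathrm{8D}}^{(-m)}(\tau)=T_{\mathrm{8E}}^{(-m)}(\tau)$ for $m\in\{2,4\}$ follow because the required conditions hold for $\Gamma_{\mathrm{8D}}$ and $\Gamma_{\mathrm{8E}}$ against the same pair $(h,m)$.

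For modularity, write $\delta_m:=\left(\begin{smallmatrix} m&0\\0&1\end{smallmatrix}\right)$. Since $T_h$ is invariant under $\Gamma_h$, the function $\tau\mapsto T_h(m\tau)=T_h(\delta_m\tau)$ is invariant under the conjugate group $\delta_m^{-1}\Gamma_h\delta_m$, hence under any subgroup of it. So the first task is to read off the Conway--Norton symbols of $\Gamma_g$ and $\Gamma_h$ from the appendix and verify the containment $\Gamma_g\subseteq\delta_m^{-1}\Gamma_h\delta_m$. This comparison is transparent in the $N|h$ formalism: the defining formula $\Gamma_0(N|h)+e,f,\dots=\left(\begin{smallmatrix}1/h&0\\0&1\end{smallmatrix}\right)\langle\Gamma_0(N/h),W_e,\dots\rangle\left(\begin{smallmatrix}h&0\\0&1\end{smallmatrix}\right)$ already encodes conjugation by a diagonal matrix, so conjugating by $\delta_m$ simply rescales $N$ and $h$; the scalings in the lemma are chosen precisely so that the symbol of $\Gamma_h$ becomes one containing the symbol of $\Gamma_g$. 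One must track how the Atkin--Lehner sets $\mathcal{W}_h$ and $\mathcal{W}_g$ correspond under the rescaling, which is the delicate point for $T_{\mathrm{24D}}^{(-2)}=T_{\mathrm{12E}}(2\tau)$, $T_{\mathrm{24D}}^{(-4)}=T_{\mathrm{6C}}(4\tau)$, $T_{\mathrm{16B}}^{(-2)}=T_{\mathrm{8E}}(2\tau)$ and $T_{\mathrm{18A}}^{(-3)}=T_{\mathrm{6D}}(3\tau)$.

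The analytic conditions are then immediate except at the cusps other than $\infty$. From $T_h(\tau)=q^{-1}+O(q)$ we get $T_h(m\tau)=q^{-m}+O(q)$ at once --- with vanishing constant term and principal part exactly $q^{-m}$ --- and $T_h(m\tau)$ is holomorphic on $\mathbb{H}$ because $\tau\mapsto m\tau$ maps $\mathbb{H}$ into $\mathbb{H}$ and $T_h$ has no interior poles. The only substantive point is that $T_h(m\tau)$ must be holomorphic at the remaining cusps of $\Gamma_g$: since $T_h$ has a pole only at its cusp $\infty$, and $\delta_m$ fixes $\infty$, this amounts to showing that $\infty$ is the only cusp of $\Gamma_g$ lying over the cusp $\infty$ of $\delta_m^{-1}\Gamma_h\delta_m$ --- i.e. that passing from $\delta_m^{-1}\Gamma_h\delta_m$ down to the subgroup $\Gamma_g$ does not split the pole. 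This can be settled by a width count: in every moonshine group $\Gamma_g=N||h+\dots$ the cusp $\infty$ has width $1$, because by Lemma \ref{sigma}(\ref{haha}) the width-$1/h$ translation $\left(\begin{smallmatrix}1&1/h\\0&1\end{smallmatrix}\right)$ is sent to $e^{-2\pi i/h}\neq1$ under $\sigma_g$, so $(\Gamma_g)_\infty=\langle\pm\left(\begin{smallmatrix}1&1\\0&1\end{smallmatrix}\right)\rangle$; while the cusp $\infty$ of $\delta_m^{-1}\Gamma_h\delta_m$ has width $1/m$, being $\delta_m^{-1}$ applied to the width-$1$ cusp $\infty$ of $\Gamma_h$. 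Thus the cusp $\infty$ of $\Gamma_g$ accounts for $m$ units of the index $[\delta_m^{-1}\Gamma_h\delta_m:\Gamma_g]$ above $\infty$, and one only has to confirm, using the appendix, that this index is exactly $m$ (equivalently, that no further cusps of $\Gamma_g$ sit over $\infty$). The remaining cusps of $\Gamma_g$ then land at cusps of $\Gamma_h$ distinct from $\infty$, where $T_h$ is holomorphic, so $T_h(m\tau)$ has no pole there; uniqueness concludes the argument.

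The step I expect to be the main obstacle is this cusp bookkeeping, carried out case by case: for each identity one must pin down the Conway--Norton symbols and Atkin--Lehner sets of both groups from the appendix, verify the containment $\Gamma_g\subseteq\delta_m^{-1}\Gamma_h\delta_m$, and check that the pole at $\infty$ does not spread --- the cases carrying Atkin--Lehner involutions, and the interplay between the groups $N|h$ and their kernel subgroups $N||h$, being where care is needed. Several of the identities can also be checked against known eta-quotient expressions for the Hauptmoduls involved, which gives a convenient sanity check.
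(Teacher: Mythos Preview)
Your proposal is correct and follows essentially the same strategy as the paper's proof: both argue that each side is a modular function on $\Gamma_g$ with Fourier expansion $q^{-m}+O(q)$ at $\infty$ and no other poles, and then invoke uniqueness. The paper's proof is a terse three sentences asserting these facts without justification, whereas you spell out the mechanics---the containment $\Gamma_g\subseteq\delta_m^{-1}\Gamma_h\delta_m$ via the Conway--Norton symbols, and the width-count argument preventing the pole from spreading---so your version is a fleshed-out form of the same argument rather than a different one.
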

 
 \begin{proof}
 In each case, we note that both sides are modular on $\Gamma_g$ for $g$ in the conjugacy class in the subscript on the left. In addition, both have Fourier expansion $q^{-m} + O(q)$ at infinity and no other poles. Since there is a unique such function, they must be equal.
 \end{proof}

\begin{remark}
The identities involving 8E and 16B are special cases of Lemma 2.11 of \cite{me} which provides an expression for $T_g^{(-m)}(\tau)$ as a combination of Hauptmoduln on lower levels hit with Hecke operators for all $g$ with $\Gamma_g = \Gamma_0(N)$. The other identities above suggest that this result generalizes to other moonshine groups.
\end{remark}

\begin{Lemma} \label{more}
Suppose $m$ is even and $g$ is in conjugacy class $\mathrm{8D, 8E, 16B, 18A}$ or $\mathrm{24D}$. Then $c_g(-m,n) = 0$ for all $n \not\equiv \pm m \pmod \ell$ for $\ell = 4, 4, 8, 3$ or $4$ respectively. When $g$ is in $\mathrm{18A}$, this holds for all $m$.
\end{Lemma}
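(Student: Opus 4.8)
The plan is to deduce the statement from two ingredients already at hand: the scaling identities of Lemma \ref{ids}, extended to arbitrary even $m$ by the same uniqueness argument, together with the support statements of Lemma \ref{ding}; and, for the class $\mathrm{18A}$, a projection–operator argument of the type used in the proof of Lemma \ref{ding}. Throughout I will use that $F(d\tau)$ is supported on exponents divisible by $d$, that $\{m,-m\}\equiv\{m\}\pmod 4$ when $m$ is even, and that $\{m,-m\}\equiv\{1,2\}\pmod 3$ when $3\nmid m$.

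For $g$ in $\mathrm{8D}$, $\mathrm{8E}$, $\mathrm{16B}$ or $\mathrm{24D}$, write $m=2m'$. Exactly as in the proof of Lemma \ref{ids} — both sides are modular functions on $\Gamma_g$ with expansion $q^{-m}+O(q)$ at $\infty$ and no other pole, hence coincide — one first establishes $T_{\mathrm{8D}}^{(-2m')}(\tau)=T_{\mathrm{8E}}^{(-2m')}(\tau)=T_{\mathrm{4C}}^{(-m')}(2\tau)$ and $T_{\mathrm{24D}}^{(-2m')}(\tau)=T_{\mathrm{12E}}^{(-m')}(2\tau)$, and, when $4\mid m$, also $T_{\mathrm{16B}}^{(-m)}(\tau)=T_{\mathrm{4C}}^{(-m/4)}(4\tau)$; for $m'=1$ (resp.\ $m=4$) these are the content of Lemma \ref{ids}, and the inclusion $\left(\begin{smallmatrix}d&0\\0&1\end{smallmatrix}\right)\Gamma_g\left(\begin{smallmatrix}d&0\\0&1\end{smallmatrix}\right)^{-1}\subseteq\Gamma_{g'}$ that makes the right-hand side modular on $\Gamma_g$ is read off from the appendix. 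Since $\mathrm{4C}$ and $\mathrm{12E}$ occur in the table of Lemma \ref{ding} with $\ell=2$, the series $T_{\mathrm{4C}}^{(-m')}$ and $T_{\mathrm{12E}}^{(-m')}$ are supported on $n\equiv-m'\pmod 2$; substituting $2\tau$ then shows that $T_g^{(-m)}$ is supported on exponents $\equiv-m\pmod 4=\{m,-m\}\bmod 4$ for $g\in\{\mathrm{8D},\mathrm{8E},\mathrm{24D}\}$. For $g=\mathrm{16B}$, Lemma \ref{ding} already gives support on $n\equiv-m\pmod 4$; when $m\equiv 2\pmod 4$ this residue set is exactly $\{m,-m\}\bmod 8$, while when $4\mid m$ the identity $T_{\mathrm{16B}}^{(-m)}(\tau)=T_{\mathrm{4C}}^{(-m/4)}(4\tau)$ combined with the $\ell=2$ support of $T_{\mathrm{4C}}^{(-m/4)}$ forces the exponents to be $\equiv-m\pmod 8=\{m,-m\}\bmod 8$.

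For $g$ in $\mathrm{18A}$ I would work directly with the projection operator. From the appendix $\Gamma_{\mathrm{18A}}=\Gamma_0(18)+W_2$. Since $3^2\mid 18$, the matrix $\left(\begin{smallmatrix}1&1/3\\0&1\end{smallmatrix}\right)$ normalizes $\Gamma_0(18)$, and conjugation by $W_2$ permutes the three cosets $\left(\begin{smallmatrix}1&j/3\\0&1\end{smallmatrix}\right)\Gamma_0(18)$, so the operator $\widetilde{S}_{3,0}=S_{3,0}$ commutes with $W_2$ on functions modular for $\Gamma_0(18)$. Consequently $(T_{\mathrm{18A}}^{(-m)}\,|\,\widetilde{S}_{3,0})$ is modular for $\Gamma_0(18)$ (by the sieving principle stated before Lemma \ref{ding}, as $\lcm(18,9)=18$) and $W_2$-invariant, so its only poles lie over the single pole cusp of $T_{\mathrm{18A}}^{(-m)}$ on $\Gamma_{\mathrm{18A}}$, i.e.\ over the pair $\{\infty,1/9\}$; since $W_2$ swaps these cusps and $(T_{\mathrm{18A}}^{(-m)}\,|\,\widetilde{S}_{3,0})$ is $W_2$-invariant, its local expansion at $1/9$ agrees with the one at $\infty$, namely $\bigl(\tfrac13\sum_{j=0}^{2}\zeta_3^{-jm}\bigr)q^{-m}+O(q)$. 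When $3\nmid m$ this is $O(q)$, so $(T_{\mathrm{18A}}^{(-m)}\,|\,\widetilde{S}_{3,0})$ is a holomorphic modular function on the genus-zero curve $X_0(18)$ and hence vanishes, which says $c_g(-m,n)=0$ for $3\mid n$ — exactly the claim. When $3\mid m$ I would instead establish $T_{\mathrm{18A}}^{(-3k)}(\tau)=T_{\mathrm{6D}}^{(-k)}(3\tau)$ by the same uniqueness argument, so that $T_{\mathrm{18A}}^{(-m)}$ is supported on $3\mid n$, matching $\{m,-m\}\equiv\{0\}\pmod 3$.

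The main obstacle is the uniform treatment of arbitrary even $m$: Lemma \ref{ids} records only finitely many scaling identities, and everything above rests on their extensions $T_g^{(-2^am')}(\tau)=T_{g'}^{(-m')}(2^a\tau)$ and $T_{\mathrm{18A}}^{(-3k)}(\tau)=T_{\mathrm{6D}}^{(-k)}(3\tau)$. Each follows by the same uniqueness principle, but checking its hypotheses — that conjugation by $\left(\begin{smallmatrix}d&0\\0&1\end{smallmatrix}\right)$ carries $\Gamma_g$ into the lower-level moonshine group $\Gamma_{g'}$, and that $\tau\mapsto d\tau$ creates no new poles — has to be done case by case from the group data in the appendix; and in the $\mathrm{18A}$ argument one must likewise confirm, just as in the proof of Lemma \ref{ding}, that the projection operator manufactures no poles at cusps of $X_0(18)$ beyond those over $\infty$.
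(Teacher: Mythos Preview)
Your argument is correct and takes a genuinely different route from the paper. The paper proceeds by \emph{multiplicative induction}: it uses the base cases in Lemma~\ref{ids} (and, for $\mathrm{16B}$, two base cases) and then writes $T_g^{(-(k+1)\ell)}$ as $T_g^{(-\ell)}\cdot T_g^{(-k\ell)}$ plus lower-order $T_g^{(-i\ell)}$, inducting on $k$; for $\mathrm{18A}$ it invokes the explicit identity $T_{\mathrm{18A}}=T_{\mathrm{18D}}-2/T_{\mathrm{18D}}$ together with Lemma~\ref{ding} to obtain the $m=1,2$ base cases. You instead extend the scaling identities of Lemma~\ref{ids} to all $m'$ in one stroke---which is legitimate because $T_{g'}^{(-m')}$ is a polynomial in the Hauptmodul $T_{g'}$, so the modularity and pole checks reduce to the $m'=1$ case already established---and then feed in the $\ell=2$ support from Lemma~\ref{ding}; for $\mathrm{18A}$ with $3\nmid m$ you replace the ad hoc identity by a clean sieving argument on $\Gamma_0(18)+W_2$. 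Your approach is more uniform and avoids the separate induction, at the cost of the case-by-case group/cusp verifications you flag at the end (which indeed all go through: e.g.\ one checks that $W_2$ actually commutes with $\left(\begin{smallmatrix}1&1/3\\0&1\end{smallmatrix}\right)$, and that this translation fixes both $\infty$ and $1/9$ on $X_0(18)$, so no stray poles are introduced). The paper's induction, by contrast, never needs to revisit the group containments beyond the finitely many cases recorded in Lemma~\ref{ids}.
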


\begin{proof}
When $m = \ell$, the claim follows from Lemma \ref{ids}. Suppose that the claim holds when $m = i\ell$ for all $0 < i \leq k$. We can write $T_g^{(-(k+1)\ell)}(\tau)$ as $T_g^{(-\ell)}(\tau) \cdot T_g^{(-k\ell)}(\tau)$ plus a linear combination of $T_g^{(-i\ell)}(\tau)$ for $i \leq k$. Each term has coefficients supported on the arithmetic progression $ n\equiv 0 \pmod \ell$, so the claim holds for all $m \equiv 0 \pmod \ell$ by induction. For $g$ not in 18A, if $m = \ell/2$, Lemma \ref{ids} together with Lemma \ref{ding} shows that $T_g^{(-m)}(\tau)$ has coefficients $c_g(-m,n)$ supported on the progression $n \equiv \ell/2 \pmod \ell$. As before, suppose the claim holds when $m = \ell/2 + i\ell$ for all $0 \leq i \leq k$. We can write $T_g^{(-\ell/2 -(k+1)\ell)}(\tau)$ as $T_g^{(-\ell/2)}(\tau) \cdot T_g^{(-(k+1)\ell)}(\tau)$ plus a linear combination of $T_g^{(-\ell/2-i\ell)}(\tau)$ for $i \leq k$. The claim now follows by induction and the $m \equiv 0 \pmod \ell$ case.

If $g$ is in 16B, we need the two base cases $m = 2$ and $m= 6$. These follow from Lemma \ref{ids} and Lemma \ref{ding} and the fact that $T_{\mathrm{16B}}^{(-6)}(\tau)$ is a linear combination of $T_{\mathrm{16B}}^{(-4)}(\tau)T_{\mathrm{16B}}^{(-2)}(\tau)$ and $T_{\mathrm{16B}}^{(-2)}(\tau)$. Finally, for $g$ in 18A, we use the identity
\[T_{\mathrm{18A}}(\tau) = T_{\mathrm{18D}}(\tau) - \frac{2}{T_{\mathrm{18D}}(\tau)},\]
together with Lemma \ref{ding}, to establish the $m = 1$ and $m = 2$ base cases. Arguing by induction as before completes the proof.
\end{proof}

\begin{Lemma} \label{dong}
If $\Gamma_g = N||h + \mathcal{W}_g$ then $c_g(-m,n) = 0$ for all $n \not\equiv -m \pmod h$.
\end{Lemma}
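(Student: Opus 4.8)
The plan is to exploit the fact that, although $T_g^{(-m)}$ is only $\Gamma_g$-invariant, translation by $1/h$ carries it to a scalar multiple of itself, and that scalar separates the Fourier coefficients by the residue of $n+m$ modulo $h$. Write $T := \left(\begin{smallmatrix} 1 & 1/h \\ 0 & 1 \end{smallmatrix}\right)$. First I would note that $T$ lies in the eigengroup $E_g = \Gamma_0(N|h)+\mathcal{W}_g$: indeed $\left(\begin{smallmatrix} 1/h & 0 \\ 0 & 1 \end{smallmatrix}\right)\left(\begin{smallmatrix} 1 & 1 \\ 0 & 1 \end{smallmatrix}\right)\left(\begin{smallmatrix} h & 0 \\ 0 & 1 \end{smallmatrix}\right) = T$ and $\left(\begin{smallmatrix} 1 & 1 \\ 0 & 1 \end{smallmatrix}\right) \in \Gamma_0(N/h)$. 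Since $\Gamma_g = \ker \sigma_g$ is a normal subgroup of $E_g$, the element $T$ normalizes $\Gamma_g$; hence $f(\tau) := T_g^{(-m)}(\tau + 1/h)$ is again a modular function for $\Gamma_g$. Moreover $T$ fixes the cusp $\infty$, and because $T$ normalizes $\Gamma_g$ the poles of $f$, which sit at $T^{-1}\Gamma_g\infty = \Gamma_g T^{-1}\infty = \Gamma_g\infty$, form the same orbit as the poles of $T_g^{(-m)}$. So $f$ has a pole only at $\infty$, and there it has order exactly $m$.

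Next I would invoke the genus-zero uniqueness already used in Section~\ref{s2}. From $T_g^{(-m)}(\tau) = q^{-m} + \sum_{n\geq 1} c_g(-m,n)q^n$ with $q = e^{2\pi i \tau}$, one computes
\[
e^{2\pi i m/h} f(\tau) = q^{-m} + \sum_{n\geq 1} c_g(-m,n)\, e^{2\pi i (n+m)/h}\, q^n = q^{-m} + O(q),
\]
a modular function on $\Gamma_g$ whose only pole is this order-$m$ pole at $\infty$. By uniqueness of such a function it must equal $T_g^{(-m)}(\tau)$, so $T_g^{(-m)}(\tau + 1/h) = e^{-2\pi i m/h} T_g^{(-m)}(\tau)$. (Equivalently: $T_g^{(-m)}$ is an eigenform for $E_g$ with multiplier $\sigma_g^m$, here evaluated on $T$ via part~(3) of Lemma~\ref{sigma}.) Comparing coefficients of $q^n$ for $n\geq 1$ gives $c_g(-m,n)\bigl(e^{2\pi i(n+m)/h}-1\bigr)=0$, so $c_g(-m,n)=0$ whenever $n \not\equiv -m \pmod h$; for $n\leq 0$ the only nonzero coefficient is the leading one at $n=-m$, which trivially satisfies $-m\equiv -m \pmod h$. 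This proves the lemma.

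I do not expect a genuine obstacle here; the two points that need care are (i) verifying that the translate $f$ acquires \emph{no new poles} — which is precisely why one needs $T$ to normalize $\Gamma_g$, not merely to lie in $E_g$ — and (ii) the fact that the normalization $q^{-m}+O(q)$ has no intermediate terms $q^{-m+1},\dots,q^0$, which is what makes the scalar multiple $e^{2\pi i m/h}f$ land in the class characterized by the uniqueness statement. If one prefers to sidestep (i) entirely, an alternative is to record once and for all (from the Poincaré-series construction, since a width-$1/h$ translation of $\mathcal{P}(m,\Gamma_g)$ is $\mathcal{P}(m,\Gamma_g)$ reindexed) that $T_g^{(-m)}$ transforms under all of $E_g$ by $\sigma_g^m$, and then read off the translation case directly; either route yields the same conclusion.
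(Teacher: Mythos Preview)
Your proof is correct. Both your argument and the paper's hinge on the single identity $T_g^{(-m)}(\tau+1/h)=e^{-2\pi i m/h}T_g^{(-m)}(\tau)$, but you reach it by a different route: you apply the genus-zero uniqueness characterization directly to $T_g^{(-m)}$ (after checking that the $1/h$-translate is still $\Gamma_g$-modular with its only pole at $\infty$), whereas the paper first invokes the eigengroup property $T_g(\gamma\tau)=\sigma_g(\gamma)T_g(\tau)$ from Lemma~\ref{sigma}(3) to settle the case $m=1$, and then propagates to general $m$ via the fact that $T_g^{(-m)}$ is a polynomial in the Hauptmodul $T_g$. Your approach is a bit more self-contained for arbitrary $m$ and makes the role of genus-zero uniqueness explicit; the paper's approach is slightly more elementary in that it needs only the $m=1$ eigengroup relation (already recorded) and the algebraic structure of the ring of modular functions, avoiding any analysis of pole locations under conjugation. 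Your parenthetical observation that $T_g^{(-m)}$ transforms under $E_g$ by $\sigma_g^m$ is exactly the bridge between the two arguments.
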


\begin{proof}
If we let $\gamma = \left(\begin{smallmatrix} 1 & 1/h \\ 0 & 1 \end{smallmatrix}\right)$, then by Lemma \ref{sigma} (\ref{haha}), we have 
\[T_g(\tau) = \frac{1}{h}\sum_{j=0}^{h-1} e^{2\pi i j/h}T_g(\gamma^j \tau) = (T_g|S_{h,-1})(\tau),\]
showing that $T_g(\tau)$ is supported on terms $q^n$ with $n \equiv -1 \pmod h$. Since $T_g^{(-m)}(\tau)$ is a polynomial in $T_g(\tau)$, only terms $T_g(\tau)^k$ with $k \equiv m \pmod h$ may appear. Hence, $T_g^{(-m)}(\tau)$ is supported on terms $q^n$ with $n \equiv -k \equiv -m \pmod h$.
\end{proof}

We can now prove Theorem \ref{core}.

\begin{proof}[Proof of Theorem \ref{core}]
Using Lemmas \ref{ding}, \ref{more}, and \ref{dong} and the explicit computations of Kloosterman sums, we see that whenever $K_N(g, \varepsilon, -m, n)$ vanishes, so does $c_g(-m, n)$, except for the special cases listed in the theorem. In these cases, it is clear that the listed term dominates and it is not hard to check that the Kloosterman sum does not vanish. The ordering listed in the appendix follows from comparing the arguments of the exponentials.
\end{proof}

\section{Non-free distributions} \label{snf}
At the beginning of Section 8.6 of \cite{DGO}, the authors use the orthogonality of characters to deduce that the functions defined by
\[T_{\chi_i}^{(-m)}(\tau) := \frac{1}{|\mathbb{M}|} \sum_{g \in \mathbb{M}}\chi_i(g) T_g^{(-m)}(\tau)\]
are generating functions for the multiplicities $\mathbf{m}_i(-m,n)$. In particular,
\begin{equation}\label{ft}
\mathbf{m}_i(-m, n) = \frac{1}{|\mathbb{M}|}\sum_{g \in \mathbb{M}} \chi_i(g) c_g(-m,n).
\end{equation}
By Theorem \ref{core}, we have that $c_1(-m,n)$ exponentially dominates $c_g(-m,n)$ for all other $g \in \mathbb{M}$. Thus we can see, as concluded in \cite{DGO}, that as $n \rightarrow \infty$,
\[\mathbf{m}_i(-m,n) \sim \frac{c_{1}(-m,n)}{|\mathbb{M}|}\dim(\chi_i).\]
The first order correction to this phenomenon comes from the terms weighted by the next largest McKay-Thompson coefficients in \eqref{ft}. With this idea, we can prove Theorem \ref{nf}.

\begin{proof}[Proof of Theorem \ref{nf}]
 By Theorem \ref{core}, the coefficients $c_g(-m,n)$ for $g$ in conjugacy class 2A dominate all others with $g$ not in 1 or 2A. The Kloosterman sum $K_2(g,2,-m,n) = 1$ for all $m$ and $n$ so we can write
 \[c_g(-m,n) \sim \frac{m^{1/4}}{(2n)^{3/4}} \cdot e^{2\pi\sqrt{2mn}}.\]
The number of regular representations we can pull out of $V_n^\natural$ is limited by the character with the smallest ratio  of $\chi_i(\mathrm{2A})$ to $\dim(\chi_i)$. This minimum is achieved with the characters we have labeled $\chi_{16}$ and $\chi_{17}$, which have
\[\frac{\chi_{16}(\mathrm{2A})}{\dim(\chi_{16})} = \frac{\chi_{17}(\mathrm{2A})}{\dim(\chi_{17})} = -\frac{1}{3301375}.
 \]
Therefore, when we add up the contributions to the non-free part from \eqref{ft} for $g$ in conjugacy classes 1 and 2A, we find that as $n \rightarrow \infty$,
 \begin{align*}
 \mathbf{nf}_i(-m,n)  &\sim \frac{|\mathrm{2A}|}{|\mathbb{M}|} \left(\frac{c_g(-m,n)}{3301375}\dim(\chi_i) +  c_g(-m,n)\chi_i(\mathrm{2A})\right) \\
 &\sim \left(\frac{\dim(\chi_i)}{3301375}+\chi_i(\mathrm{2A})\right) \frac{|\mathrm{2A}|m^{1/4}}{|\mathbb{M}|(2n)^{3/4}}\cdot e^{2\pi\sqrt{2mn}},
 \end{align*}
 as desired.
\end{proof}

\section{Appendix}

\subsection*{Moonshine groups} Here we list the symbols $\Gamma_g = N||h + e, f, \ldots$ for each conjugacy class of the monster. As in \cite{CN}, if $h = 1$, we omit the $``||1"$, if $\mathcal{W}_g = \{1\}$ then we just write $N||h$, and if $\mathcal{W}_g$ contains every exact divisor of $N/h$ then we write $N||h+$. The labeling of conjugacy classes is as in \cite{57}.

\begin{multicols}{3}
\begin{tabular}{ll}
1A  & $ 1 $\\
2A  & $ 2+ $\\
2B  & $ 2 $\\
3A  & $ 3+ $\\
3B  & $ 3 $\\
3C  & $ 3||3 $\\
4A  & $ 4+ $\\
4B  & $ 4||2+ $\\
4C  & $ 4 $ \\
4D  & $ 4||2 $\\
5A  & $ 5+ $\\
5B  & $ 5 $\\
\end{tabular}

\columnbreak

\begin{tabular}{ll}
6A  & $ 6+ $\\
6B  & $ 6+6 $\\
6C  & $ 6+3 $\\
6D  & $ 6+2 $\\
6E  & $ 6 $\\
6F  & $ 6||3 $\\
7A  & $ 7+ $\\
7B  & $ 7 $\\
8A  & $ 8+ $\\
8B  & $ 8||2+ $ \\
8C  & $ 8||4+ $\\
8D  & $ 8||2 $\\
\end{tabular}

\columnbreak

\begin{tabular}{ll}
8E  & $ 8 $\\
8F  & $ 8||4 $\\
9A  & $ 9+ $\\
9B  & $ 9 $\\
10A  & $ 10+ $\\
10B  & $ 10+5 $\\
10C  & $ 10+2 $\\
10D  & $ 10+10 $\\
10E  & $ 10 $\\
11A  & $ 11+ $\\
12A  & $ 12+ $\\
12B  & $ 12+4 $ \\

\end{tabular}

\columnbreak

\begin{tabular}{ll}
12C  & $ 12||2+ $\\
12D  & $ 12||3+ $\\
12E  & $ 12+3 $\\
12F  & $ 12||2+6 $\\
12G  & $ 12||2+2 $\\
12H  & $ 12+12 $\\
12I  & $ 12 $\\
12J  & $ 12||6 $\\
13A  & $ 13+ $\\
13B  & $ 13 $\\
14A  & $ 14+ $\\
14B  & $ 14+7 $\\
14C  & $ 14+14 $\\
15A  & $ 15+ $\\
15B  & $ 15+5 $\\
15C  & $ 15+15 $\\
15D  & $ 15||3 $\\
16A  & $ 16||2+ $\\
16B  & $ 16 $\\
16C  & $ 16+ $\\
17A  & $ 17+ $\\
18A  & $ 18+2 $\\
18B  & $ 18+ $\\
18C  & $ 18+9 $\\
18D  & $ 18 $\\
18E  & $ 18+18 $\\
19A  & $ 19+ $\\
20A  & $ 20+ $\\
20B  & $ 20||2+ $\\
20C  & $ 20+4 $\\
20D  & $ 20||2+5 $\\
20E  & $ 20||2+10 $\\
20F  & $ 20+20 $\\
21A  & $ 21+ $\\
21B  & $ 21+3 $\\
21C  & $ 21||3+ $\\
21D  & $ 21+21 $\\
22A  & $ 22+ $\\
22B  & $ 22+11 $\\
23AB  & $ 23+ $\\
24A  & $ 24||2+ $\\
24B  & $ 24+ $\\
24C  & $ 24+8 $\\
24D  & $ 24||2+3 $\\
24E  & $ 24||6+ $\\
24F  & $ 24||4+6 $\\
\end{tabular}

\columnbreak

\begin{tabular}{ll}
24G  & $ 24||4+2 $\\
24H  & $ 24||2+12 $\\
24I  & $ 24+24 $\\
24J  & $ 24||12 $\\
25A  & $ 25+ $\\
26A  & $ 26+ $\\
26B  & $ 26+26 $\\
27A  & $ 27+ $\\
27B  & $ 27+ $\\
28A  & $ 28||2+ $\\
28B  & $ 28+ $\\
28C  & $ 28+7 $\\
28D  & $ 28||2+14 $\\
29A  & $ 29+ $\\
30A  & $ 30+6,10,15 $\\
30B  & $ 30+ $\\
30C  & $ 30+3,5,15 $\\
30D  & $ 30+5,6,30 $\\
30E  & $ 30||3+10 $\\
30F  & $ 30+2,15,30 $\\
30G  & $ 30+15 $\\
31AB  & $ 31+ $\\
32A  & $ 32+ $\\
32B  & $ 32||2+ $\\
33A  & $ 33+11 $\\
33B  & $ 33+ $\\
34A  & $ 34+ $\\
35A  & $ 35+ $\\
35B  & $ 35+35 $\\
36A  & $ 36+ $\\
36B  & $ 36+4 $\\
36C  & $ 36||2+ $\\
36D  & $ 36+36 $\\
38A  & $ 38+ $\\
39A  & $ 39+ $\\
39B  & $ 39||3+ $\\
39CD  & $ 39+39 $\\
40A  & $ 40||4+ $\\
40B  & $ 40||2+ $\\
40CD  & $ 40||2+20 $\\
41A  & $ 41+ $\\
42A  & $ 42+ $\\
42B  & $ 42+6,14,21 $\\
42C  & $ 42||3+7 $\\
42D  & $ 42+3,14,42 $\\

\end{tabular}

\columnbreak

\begin{tabular}{ll}
44AB  & $ 44+ $\\
45A  & $ 45+ $\\
46AB  & $ 46+23 $\\
46CD  & $ 46+ $\\
47AB  & $ 47+ $\\
48A  & $ 48||2+ $\\
50A  & $ 50+ $\\
51A  & $ 51+ $\\
52A  & $ 52||2+ $\\
52B  & $ 52||2+26 $\\
54A  & $ 54+ $\\
55A  & $ 55+ $\\
56A  & $ 56+ $\\
56BC  & $ 56||4+14 $\\
57A  & $ 57||3+ $\\
59AB  & $ 59+ $\\
60A  & $ 60||2+ $\\
60B  & $ 60+ $\\
60C  & $ 60+4,15,60 $\\
60D  & $ 60+12,15,20 $\\
60E  & $ 60||2+5,6,30 $\\
60F  & $ 60||6+10 $\\
62AB  & $ 62+ $\\
66A  & $ 66+ $\\
66B  & $ 66+6,11,66 $\\
68A  & $ 68||2+ $\\
69AB  & $ 69+ $\\
70A  & $ 70+ $\\
70B  & $ 70+10,14,35 $\\
71AB  & $ 71+ $\\
78A  & $ 78+ $\\
78BC  & $ 78+6,26,39 $\\
84A  & $ 84||2+ $\\
84B  & $ 84||2+6,14,21 $\\
84C  & $ 84||3+ $\\
87AB  & $ 87+ $\\
88AB  & $ 88||2+ $\\
92AB  & $ 92+ $\\
93AB  & $ 93||3+ $\\
94AB  & $ 94+ $\\
95AB  & $ 95+ $\\
104AB  & $ 104||4+ $\\
105A  & $ 105+ $\\
110A  & $ 110+ $\\
119AB  & $ 119+ $\\
\end{tabular}

\end{multicols}

\subsection*{Ordering by asymptotic magnitude of McKay-Thompson coefficients} The asymptotic formulas in Theorem \ref{core} give rise to the following natural order of the conjugacy classes corresponding to the asymptotic magnitude of their McKay-Thompson coefficients $c_g(-m, n)$. 
A class is understood to be removed if $c_g(-m, n)$ vanishes, which if it does, occurs periodically with $n$. We list classes X and Y in brackets $\{\mathrm{X, Y, \ldots}\}$ when
\[\lim_{n \rightarrow \infty} \frac{|c_g(-m, n)|}{|c_{g'}(-m, n)|} = 1\]
 for $g$ in X and $g'$ in Y. We group classes as $(\mathrm{X, Y,\ldots})$ if the above limit takes on another constant value, which may depend on congruence properties of $m$ and $n$. The order within round brackets may change depending on these congruence conditions. Finally, for classes with exceptions listed in Theorem \ref{core}, we write $[\mathrm{X}]$ to indicate the placement of a class outside of the exception and $[\mathrm{X}]_{(*)}$ to refer to the placement when $m$ and $n$ satisfy the hypotheses of exception $(*)$ in Theorem \ref{core}.

\begin{align*}
&\mathrm{1A, 2A, 3A, \{2B, 4A\}, 5A, \{6A, 6B\}, 7A, (4B, 8A), (3C, 3B, 9A), \{10A, 10D\}, 11A,}\\
&\mathrm{\{6C, 12A,12H\}, 13A, \{14A, 14C\}, \{15A, 15C\}, (\{4C, 4D,8B\}, 16C), 17A, (6D, \{18B, 18E\}),} \\
&\mathrm{19A, \{10B, 20A, 20F\}, \{21A, 21D\}, 22A, 23AB, (\{12C, 12F\}, \{24B, 24I\}), (5B, 25A ),} \\
&\mathrm{\{26A, 26B\}, \{27A,27B\}, \{14B, 28B \}, 29A, \{30B, 30D, 30F\}, 31AB, (8C,16A, 32A),} \\
&\mathrm{33B, 34A, \{35A, 35B\}, (\{6F,12D\}, 6E, 12B, \{36A, 36D\}), 38A, \{39A, 39CD\}, \{20B, 20E\}, } \\
&\mathrm{41A, \{42A, 42D\},\{22B, 44AB\}, (15B, 45A), 46CD, 47AB, \{12E, 24A, 24H\}, 7B, (10C, 50A),} \\
&\mathrm{51A, 54A, 55A, (\{28A, 28D\}, 56A), 59AB, \{30A, 30C, 30G,60B, 60C\}, 62AB, 21C,} \\
&\mathrm{(\{[8D], [8E], 8F\}, 32B), \{66A, 66B\}, 69AB, 70A, 71AB, (12G, 36C, 24C), 78A,} \\
&\mathrm{(20D, \{40B, 40CD\}), 9B, 42B, 87AB, 30E, \{46AB, 92AB\}, 94AB, 95AB, (24F, 48A),} \\
&\mathrm{ 33A, (10E, 20C), \{52A, 52B\}, 105A, 110A, 28C, 39B, 119AB, \{60A, 60E\}, 68A, 70B,} \\
&\mathrm{(\{12J,24E\}, 12I), 21B, 78BC, 40A, [18A], 84A, 13B, 57A, 88AB, 60D, [24D], 56BC, } \\
&\mathrm{ 15D, \{42C, 84C\}, ([16B], \{[8D]_{(1)}, [8E]_{(1)}\}), 93AB, 24G, \{[18A]_{(3)}, 18D, 36B\}, 84B, 60F,}\\
&\mathrm{104AB, (24J, [24D]_{(4)}), [16B]_{(2)}}
\end{align*}

\end{document}